\newtheorem{theorem}{Theorem}[section]
\newtheorem{lemma}[theorem]{Lemma}
\newtheorem{proposition}[theorem]{Proposition}
\newtheorem{defin}[theorem]{Definition}
\newenvironment{proof}{\noindent \textbf{Proof: }}{\hfill
$\Box$  \vspace{1ex}}
\newenvironment{definition}{\begin{defin}\em}{\end{defin}}
\newtheorem{defins}[theorem]{Definitions}
\newtheorem{exs}[theorem]{Examples}
\newtheorem{ex}[theorem]{Example}
\newtheorem{rem}[theorem]{Remark}
\newtheorem{rems}[theorem]{Remarks}
\newtheorem{corollary}[theorem]{Corollary}
\newcommand{\F}{\mathbb{F}}
\newcommand{\N}{\mathbb{N}}
\def\Bb {\mathcal{B}}
\def\bchi{\boldsymbol{\chi}}
\def \ord {\rm{ord}\,}
\def \Ord {\rm{Ord}\,}
\def \tA {\widetilde{A}}
\begin{document}

\begin{center}
{\Large\textbf{On the existence of pairs of primitive and normal 
elements over finite fields}}
\end{center}
\vspace{3ex}

\noindent\begin{center} 
\textsc{C\'{\i}cero Carvalho, Jo\~{a}o Paulo Guardieiro, Victor G.L. Neumann 
and Guilherme Tizziotti}\\ 
\vspace{1ex}
\small{Faculdade de Matem\'{a}tica, Universidade Federal de Uberl\^{a}ndia, 
Av. 
J. N. 
\'{A}vila  2121, 38.408-902 Uberl\^{a}ndia -- MG, Brazil }
\end{center}
%
%
%
%
%

\vspace{8ex}
\noindent
\textbf{Keywords:} Primitive element, normal element, normal basis, finite 
fields.\\
\noindent
\textbf{MSC:} 12E20, 11T23

\vspace{4ex}
\begin{small}
\begin{center}
\textbf{Abstract}
\end{center}
Let $\F_{q^n}$ be a finite field with $q^n$ elements, and let $m_1$ and $m_2$ 
be 
positive integers. Given polynomials  $f_1(x), f_2(x) \in \F_q[x]$ with 
$\deg(f_i(x)) \leq m_i$, for $i = 1, 2$, and such that the rational function 
$f_1(x)/f_2(x)$ 
satisfies  certain conditions which we define, we present a sufficient 
condition 
for 
the  existence of a primitive element $\alpha \in \F_{q^n}$, normal over 
$\F_q$, 
such that 
$f_1(\alpha)/f_2(\alpha)$ is also primitive.  
\end{small}

\section{Introduction}
Let $\F_q$ be a finite field with $q$ elements. 
An element $\alpha\in\mathbb{F}_q$ is called \textit{primitive} if it is a 
generator of the multiplicative cyclic group $\mathbb{F}_q^*$, so that there 
are $\phi(q-1)$ primitive elements in $\mathbb{F}_q$, where $\phi$ is the Euler 
totient 
function.  Let $n$ be a positive 
integer, an element $\beta \in \mathbb{F}_{q^n}$ is called \textit{normal over 
$\F_q$} if the set $\{\beta, \beta^q, \ldots, \beta^{q^{n - 1}} \}$ 
is a basis for 
$\mathbb{F}_{q^n}$ as an  $\mathbb{F}_q$-vector space. The primitive normal 
basis theorem states that for any $q$ and $n$ there exists an element in 
$\mathbb{F}_{q^n}$ which is simultaneously primitive and normal over $\F_q$. 

In their proof of this theorem (see \cite{CH}), Cohen and Huczynska developed a 
technique which 
has, since then, been adapted to treat other problems involving primitive and 
normal elements. For example, these same authors used a modified version of 
their technique to prove the strong normal basis theorem (see \cite{CH2}), 
which states that, 
except for a few pairs $(q,n)$, one can find an element $\alpha \in      
\mathbb{F}_{q^n}$  such that $\alpha$ and $\alpha^{-1}$ are primitive and 
normal over $\F_q$. Later, using the same line of reasoning, Kapetanakis (see 
\cite{kapeta})  proved 
that there exists 
an element $\alpha \in      
\mathbb{F}_{q^n}$  such that $\alpha$ and $(a \alpha + b)/(c \alpha + d)$, with 
$a,b,c,d \in  \F_q$, are 
primitive and 
normal over $\F_q$, except for a few combinations of $q$, $n$ and $a,b,c,d$. 
In 2017 Anju and Sharma, also following ideas from \cite{CH} and assuming that 
$q$ has characteristic two, proved that given polynomials $f(x), g(x) \in 
\F_{q^n}[x]$,  being $f(x)$ of degree at most 2 and $g(x)$ of degree at most 1, 
then there exists $\alpha \in \F_{q^n}$, primitive 
and normal over $\F_q$, such 
that $f(\alpha)/g(\alpha)$ is also primitive, except for a few combinations of 
$q = 2^k$, $m$, $f(x)$ and 
$g(x)$ (see \cite{mersenne}).  
More recently, Hazarika, Basnet and Cohen (\cite{HBC}) studied this problem 
working with a field of characteristic three and considering polynomials    
of degree at most two instead of rational functions. Hazarika and Basnet 
(\cite{HB})  also 
considered the related problem of finding pairs of elements $(\alpha, 
f(\alpha))$, both in $\F_{q^n}$, and both being primitive and normal over 
$\F_q$, where $f$ is a quotient of a polynomial of degree two by a polynomial 
of degree at most one, and $q$ has characteristic two.

In this paper we work with a finite field of any characteristic, and given 
polynomials 
$f_1(x), f_2(x) \in  \F_{q^n}$, of any degree,  we study, like Anju and Sharma, 
the 
existence of $\alpha \in \F_{q^n}$, primitive and normal over $\F_q$, such that
$f_1(\alpha)/f_2(\alpha)$ 
is also primitive. More specifically, given positive 
integers $m_1$ and $m_2$ we determine a set $\Upsilon_q(m_1, m_2)$ (see 
Definition \ref{upsilonset}) comprising 
certain rational functions $f(x)/g(x)$, where $\deg(f_i(x)) \leq m_i$, with $i 
= 1,2$,  and we determine conditions which assure, for each $f(x)/g(x) \in 
\Upsilon_q(m_1, m_2)$,  the existence of an element   
$\alpha \in \F_{q^n}$, primitive and normal over $\F_q$, such 
that $f(\alpha)/g(\alpha)$ is also primitive (see Corollary \ref{mainresult}).

In the following section we list the definitions and results which will be used 
in the proof of the main result, which is the content of Section 3. In Section 
4 we 
present  numerical examples  illustrating our main result.

\section{Preliminaries}


Throughout this paper $p$ is a prime, $k$ is a positive integer, 
$\mathbb{F}_q$ will denote a finite field with $q=p^k$ elements and we denote
by $\mathbb{N}$ the set of positive integers.
 We start by defining a set that will play an important  role in what 
follows.

\begin{definition}  \label{upsilonset}
For $q,m_1,m_2 \in \mathbb{N}$, with $q$ a prime power, define  
$\Upsilon_{q} (m_1,m_2)$ as the set of rational functions 
$\frac{f_1}{f_2} \in \mathbb{F}_q(x)$
such that:
\begin{enumerate}  
\item[i)] 
$\deg (f_1) \leq m_1$,   $\deg (f_2) \leq m_2$; 
\item[ii)]
$\gcd(f_1,f_2)=1$;  
\item[iii)]
 there exists $n \in \mathbb{N}$ and an irreducible monic polynomial $g \in 
\F_q[x] \setminus \{x\}$ such that $\gcd(n, q - 1)=1$, $g^n  \mid  f_1 f_2$ and 
$g^{n+1} \nmid f_1 f_2$. 
\end{enumerate}
\end{definition}
%
%


%

\begin{definition}
Let  $s$ be a divisor of $q-1$, an element $\alpha\in\mathbb{F}_q^*$
is called \textit{$s$-free} if, for any $d \in \mathbb{N}$ such that $d\, |\, s$ and $d \neq 1$, there is no $\beta\in\mathbb{F}_q$ satisfying $\beta^d = \alpha$.
\end{definition}

%
%
%
%
%
%
%
%
%
%
%
%
%


For $\beta \in \mathbb{F}_{q^n}$ and $\displaystyle 
f(x) = \sum_{i=1}^{t} f_i x^i\in \F_q[x]$, we define an action of 
$\mathbb{F}_q[x]$ over $\mathbb{F}_{q^n}$ by $\displaystyle f \circ \beta = 
\sum_{i=0}^{t} f_i \beta^{q^i}$. 
Through this action $\mathbb{F}_{q^n}$ may be viewed as an $\mathbb{F}_q[x]$-module, and the
annihilator of $\beta$ is an ideal of  $\mathbb{F}_q[x]$.
 The unique monic generator  $g$ of this ideal is the order of $\beta$, denoted by $\mbox{Ord}[\beta]$.
 Observe that clearly $g$ is a factor of $x^n - 1$.
One may prove that if $\mbox{Ord}[\beta]$ is $g$, then $\beta = h \circ \lambda$ for some $\lambda \in \mathbb{F}_{q^n}$, where $h=\frac{x^n-1}{g}$. 

Similarly  to the concept of $s$-free 
element,  we 
can define what is to be a $g$-free element for any $g \in 
\mathbb{F}_q[x]$ that divides $x^n-1$.

\begin{definition} \label{g-free}
Let $g \in \mathbb{F}_q[x]$ be such that $g|x^n-1$. An element $\alpha \in 
\mathbb{F}_{q^n}$ is said to be {\em $g$-free} if for any $h \in 
\mathbb{F}_q[x]$ such that $h|g$ and $\lambda \in \mathbb{F}_{q^n}$ we have 
that   $\alpha = h \circ \lambda$ implies $h=1$.
\end{definition}

From 
\cite[Section 3]{CH}
we know that the characteristic function of the 
set of $s$-free elements, with $s \mid q^n - 1$, is given 
by
\begin{equation} \label{funcao caracteristica}
\rho_s(\alpha) = 
\theta(s)\sum_{d|s}\frac{\mu(d)}{\phi(d)}\sum_{\chi_d}\chi_d(\alpha),
\end{equation}
where $\alpha \in \F_{q^n}$, $\alpha \neq 0$, $\theta(s):=\frac{\phi(s)}{s}$, 
$\mu$ is the 
Moebius's function and 
$\chi_d$ runs through the set of $\phi(d)$ multiplicative characters of 
$\mathbb{F}_{q^n}^*$ of order 
$d$.

We endow the group of additive characters of 
$\mathbb{F}_{q^n}$ with an structure of  $\mathbb{F}_q[x]$-module by means of  
the 
operation which combines a polynomial $f$ and a character $\psi$ to produce the 
character $\psi \circ f$ defined by $\psi 
\circ f(\beta) = 
\psi(f \circ \beta)$ for all  $\beta \in \mathbb{F}_{q^n}$.
The \textit{$\mathbb{F}_q$-order of an additive character $\psi$}, denoted by 
$\Ord{(\psi)}$, is defined to be the 
unique monic polynomial $g \in 
\mathbb{F}_q [x]$ of least degree dividing 
$x^n-1$ such that $\chi \circ g$ is the trivial character in $\mathbb{F}_{q^n}$.
There are $\Phi(g)$ additive characters of 
$\mathbb{F}_q$-order $g$, where $\Phi(g):= \left|(\mathbb{F}_q[x] / g 
\mathbb{F}_q[x])^{\ast}\right|$ is the analogue Euler's function on 
$\mathbb{F}_q[x]$. 
Also in \cite[Section 3]{CH} we find the expression for the characteristic  
function 
for the set of $g$-free elements $\alpha \in \mathbb{F}_{q^n}$.
	For any $g \in \F_{q}[x]$ such that $g \mid x^n -1$ 
	this characteristic function $\kappa_{g}$
	is given by
	\begin{equation} \label{char-g-free}
	\kappa_g(\alpha) = \frac{\Phi(g)}{N(g)}
	\sum_{h \mid g} \frac{\mu'(h)}{\Phi(h)}
	\sum_{\Ord (\psi)=h} \psi(\alpha),
	\end{equation}
	where
	$N(g) = \left|   \left(  \F_{q}[x] / g\F_{q}[x] \right) 
	\right|=q^{\deg(g)}$,
	the last sum runs over all additive characters $\psi$ of $\mathbb{F}_q^n$ 
	which have   
	$\F_{q}$-order $h$, and $\mu'$ is the M\"obius function on $\F_{q}[x]$
$$
\mu'(h) =
\left\{
\begin{array}{ll}
(-1)^s & {\textrm if} \,\, h \mbox{ is a product of } s \mbox{ disctinct 
monic 
irredutible polynomials;} \\
0 & \rm{ otherwise. }
\end{array}
\right.
$$

The next result is a combination of \cite[Theorem 5.5]{Fu} and a special case 
of \cite[Theorem 5.6]{Fu}, which we will need in what follows.

\begin{lemma} \label{lema cota}
Let $v(x),u(x) \in \F_{q^n}(x)$ be rational functions. Write
	$v(x)=\prod_{j=1}^k s_j(x)^{n_j}$, where
	$s_j(x) \in \F_{q^n}[x]$ are irreducible polynomials, pairwise 
	non-associated,  and
	$n_j$ are non-zero integers. Let $D_1=\sum_{j=1}^k \deg (s_j)$,
	let $D_2=\max (\deg (u), 0)$, let $D_3$ be the degree of the denominator of 
	$u(x)$
	and let $D_4$ be the sum of degrees of those irreducible polynomials 
	dividing the denominator of $u$,
	but distinct from $s_j(x)$ ($j=1,\ldots, k$). Let $\chi$ and $\psi$ be, 
	respectively, a multiplicative character and 
	a non-trivial additive character of $\F_{q^n}$.\\
a) Assume that $v(x)$ is not of the form $r(x)^{ord(\chi)}$
	in $\mathbb{F}(x)$, where $\mathbb{F}$ is an algebraic closure of 
	$\mathbb{F}_{q^n}$. Then 
$$
	\displaystyle \left|
	\sum_{\substack{\alpha \in \mathbb{F}_{q^n}  \\ v(\alpha)\neq 0 , 
	v(\alpha)\neq
			\infty}} \chi(v(\alpha)) \right|
	\leq
	(D_1-1 ) q^{\frac{n}{2}}.
$$\\
b) Assume that
	$u(x)$ is not of the form $r(x)^{q^n}-r(x)$ in $\F(x)$, where $\mathbb{F}$ 
	is an algebraic closure of $\mathbb{F}_{q^n}$. Then
$$
	\displaystyle \left|
	\sum_{\substack{\alpha \in \mathbb{F}_{q^n}  \\ v(\alpha)\neq 0, 
	v(\alpha)\neq \infty , \\
	u(\alpha)\neq \infty}} \chi(v(\alpha)) \psi(u(\alpha)) \right|
	\leq
	\left( D_1 + D_2 + D_3 + D_4 - 1  \right) q^{\frac{n}{2}}.
$$	
\end{lemma}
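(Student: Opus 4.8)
The plan is to deduce part~a) directly from \cite[Theorem 5.5]{Fu} and part~b) from \cite[Theorem 5.6]{Fu}; since the lemma is precisely the specialization of those results to rational functions, the only real work is to match the notation and to verify that the hypotheses imposed here are exactly the non-degeneracy conditions needed there, so that the general conductor bounds collapse to the explicit quantities $D_1,\dots,D_4$. For a), I would attach to $v$ the Kummer-type lisse rank-one $\ell$-adic sheaf $\mathcal{L}_{\chi}(v)$ on the open set $U\subset\mathbb{A}^1_{\F_{q^n}}$ where $v$ is defined and nonzero, so that by the Grothendieck--Lefschetz trace formula the sum equals, up to sign, the trace of Frobenius on $H^1_c(U_{\mathbb{F}},\mathcal{L}_{\chi}(v))$; here $H^0_c$ and $H^2_c$ vanish, the latter exactly because $\mathcal{L}_{\chi}(v)$ is geometrically non-trivial, which is what the hypothesis that $v$ is not an $\ord(\chi)$-th power in $\mathbb{F}(x)$ guarantees. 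Then I would use that $\mathcal{L}_{\chi}(v)$ is tame and ramified only at the zeros and poles of $v$ and possibly at $\infty$, so the Grothendieck--Ogg--Shafarevich formula bounds $\dim H^1_c$ by $D_1-1$ with $D_1=\sum_{j=1}^{k}\deg(s_j)$, and Deligne's purity finishes a).

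For b), I would work instead with the rank-one sheaf $\mathcal{L}_{\chi}(v)\otimes\mathcal{L}_{\psi}(u)$ on the open set where $v$ is defined and nonzero and $u$ is finite. The first observation is that the hypothesis on $u$ (that it is not of the form $r^{q^n}-r$ over $\mathbb{F}$) forces it to be non-constant, hence $\mathcal{L}_{\psi}(u)$ is wildly ramified at each pole of $u$; since tensoring with the everywhere-tame factor $\mathcal{L}_{\chi}(v)$ cannot cancel this wild ramification, the tensor product is again geometrically non-trivial and the same cohomological argument runs. The next step is the conductor count via the Euler--Poincar\'e formula: the tame ramification of $\mathcal{L}_{\chi}(v)$ at the zeros and poles of $v$ contributes $D_1$, the Swan conductor of $\mathcal{L}_{\psi}(u)$ at $\infty$ contributes $D_2=\max(\deg(u),0)$, the poles of $u$ contribute $D_3$ through $\mathcal{L}_{\psi}(u)$, and the tame ramification of $\mathcal{L}_{\chi}(v)$ at those poles of $u$ not already among the $s_j$ contributes $D_4$; summing and discarding the trivial part gives $\dim H^1_c\le D_1+D_2+D_3+D_4-1$, which is the assertion.

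The step I expect to be the main obstacle — in fact the only one with any content beyond citing \cite{Fu} — is this bookkeeping of conductors: one has to handle the point at infinity correctly for both tensor factors simultaneously (tame from $v$, wild from $u$) and avoid counting twice a point that is both a zero or pole of $v$ and a pole of $u$, which is exactly why $D_4$ is defined using only the poles of $u$ distinct from $s_1,\dots,s_k$. Once the dictionary between the $\ell$-adic conductor and $D_1+D_2+D_3+D_4$ is pinned down, both estimates follow at once from \cite[Theorems 5.5 and 5.6]{Fu}, the lemma being no more than the reformulation of those theorems that is convenient for the application in Section~3.
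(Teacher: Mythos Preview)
Your proposal is correct and takes exactly the same approach as the paper: the paper does not prove this lemma at all but merely states that it ``is a combination of \cite[Theorem 5.5]{Fu} and a special case of \cite[Theorem 5.6]{Fu}.'' Your write-up actually goes further than the paper by sketching the $\ell$-adic cohomology and conductor bookkeeping behind those theorems, whereas the paper simply cites \cite{Fu} and moves on.
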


\section{Main results}\label{section-main}

Let $m_1$ and $m_2$ be positive integers, we want to determine 
conditions on $q$ and $n$ such that
for each $f \in \Upsilon_{q^n} (m_1,m_2)$
there exists $\alpha \in \mathbb{F}_{q^n}$, primitive and normal
over $\mathbb{F}_q$, such that $f(\alpha) \in \mathbb{F}_{q^n}$  is also a 
primitive element.	
For this we will need the following concept.
\begin{definition} 	\label{upsilon}
Let $q=p^k$, let
$e_1$ and $e_2$ be divisors of $q^n-1$ and let $g$ be a divisor of $x^n-1$.
Given $f \in \Upsilon_{q^n}(m_1,m_2)$
we will denote by $N_{f}(e_1,e_2,g)$ the number of $\alpha \in \F_{q^n}$ such 
that
$\alpha$ is $e_1$-free, $f(\alpha)$ is $e_2$-free and
$\alpha$ is $g$-free.
\end{definition}

It is easy to check that 
$\alpha \in \F_{q^n}$ is primitive if and only if $\alpha$ is $(q^n-1)$-free, 
and 
that $\beta \in \mathbb{F}_{q^n}$ is normal over $\mathbb{F}_q$ if and only 
if $\beta$ is $(x^n - 1)$-free. We want to find conditions which assure that
$N_{f}(q^n-1,q^n-1,x^n-1)>0$ for all
$f \in \Upsilon_{q^n} (m_1,m_2)$, yet in the next result we deal with a 
slightly more general situation. Before stating it, we observe that when $n = 
1$ or $n = 2$ then every primitive element in $\mathbb{F}_{q^n}$ is normal over 
$\mathbb{F}_q$, so we may ignore the ``normal''  requirement and  the problem 
was already solved in \cite{CSS}. Thus we assume from now on that $n \geq 
3$.

For $\ell \in \mathbb{N}$ we denote by $W(\ell)$ the number of 
distinct square-free  divisors of $\ell$, and
for a polynomial $g \in \F_q[x]$ we denote by
$W_q(g)$ the number
of monic square free factors of $g$ in $\F_{q}[x]$.

\begin{theorem}\label{principal}
	Let  $e_1$ and $e_2$ be divisors of   $q^n-1$, let 
	 $g \in \F_q[x]$ be a factor of $x^n -1$ and let $f \in \Upsilon_{q^n} 
	 (m_1,m_2)$. Then
\begin{eqnarray}
	\displaystyle N_{f}(e_1,e_2,g) & > &\frac{\phi(e_1)\phi(e_2) \Phi(g)}{e_1 
	e_2 
		N(g)}
	\left(
	q^n-(m_1+m_2+1)- \right. \nonumber \\
	& & \left. (m_1+m_2+1)q^{\frac{n}{2}}(W(e_1)W(e_2)W_q(g)-1)
	\right), \nonumber
\end{eqnarray}
	and a fortiori  
	if $q^{n/2} \geq (m_1 + m_2 + 1) W(e_1)W(e_2)W_q(g)$ then 
	$N_{f}(e_1,e_2,g)>0$.
\end{theorem}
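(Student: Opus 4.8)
The plan is to express $N_f(e_1,e_2,g)$ as a sum over $\alpha \in \F_{q^n}$ of the product of the three relevant characteristic functions, and then to extract the main term and bound the error using the character sum estimates from Lemma \ref{lema cota}. Concretely, writing $\rho_{e_1}$ and $\rho_{e_2}$ for the characteristic functions of the $e_1$-free and $e_2$-free elements given by \eqref{funcao caracteristica}, and $\kappa_g$ for the characteristic function of the $g$-free elements given by \eqref{char-g-free}, we have
$$
N_f(e_1,e_2,g) = \sum_{\alpha} \rho_{e_1}(\alpha)\,\rho_{e_2}(f(\alpha))\,\kappa_g(\alpha),
$$
where the sum is over $\alpha \in \F_{q^n}$ for which $\alpha \neq 0$, $f(\alpha) \neq 0$ and $f(\alpha) \neq \infty$ (the few excluded $\alpha$, at most $m_1+m_2$ of them since $\deg f_1 \le m_1$, $\deg f_2 \le m_2$, account for the $-(m_1+m_2+1)$ type term). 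Substituting the three formulas and interchanging sums, $N_f(e_1,e_2,g)$ becomes
$$
\frac{\phi(e_1)\phi(e_2)\Phi(g)}{e_1 e_2 N(g)} \sum_{d_1 \mid e_1}\sum_{d_2\mid e_2}\sum_{h\mid g} \frac{\mu(d_1)\mu(d_2)\mu'(h)}{\phi(d_1)\phi(d_2)\Phi(h)} \sum_{\chi_{d_1},\chi_{d_2},\psi_h} \ \sum_{\alpha} \chi_{d_1}(\alpha)\,\chi_{d_2}(f(\alpha))\,\psi_h(\alpha),
$$
the inner sum over $\alpha$ ranging over the admissible elements.

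Next I would isolate the principal term, coming from $d_1 = d_2 = 1$, $h = 1$ (so $\chi_{d_1}, \chi_{d_2}$ trivial and $\psi_h$ trivial): this inner sum is just the count of admissible $\alpha$, which is $q^n$ minus at most $m_1+m_2$ excluded values, giving the $q^n - (m_1+m_2+1)$ contribution after pulling out the leading constant. For every other triple $(d_1,d_2,h) \neq (1,1,1)$ the inner sum is a mixed character sum $\sum_\alpha \chi_{d_1}(\alpha)\chi_{d_2}(f(\alpha))\psi_h(\alpha)$, which I want to bound by Lemma \ref{lema cota}. Here one sets $v(x) = x^{a_1} f(x)^{a_2}$ for suitable exponents and $u(x) = c \cdot (\text{the polynomial whose $\circ$-action realizes } \psi_h)$; the crucial point is that condition (iii) in Definition \ref{upsilonset} — the existence of an irreducible $g \neq x$ with $g^n \| f_1 f_2$ and $\gcd(n,q-1)=1$ — is exactly what guarantees $v(x)$ is not a perfect $\mathrm{ord}(\chi)$-th power in $\overline{\F_{q^n}}(x)$ whenever $\chi$ is non-trivial (since the exponent on $g$ in $v$ is $\pm n a_2$, not divisible by $\mathrm{ord}(\chi) \mid q^n-1$ unless $a_2 = 0$, and the case $a_2=0$, $a_1 \neq 0$ reduces to $v = x^{a_1}$ which is handled directly), while the non-triviality of $\psi_h$ (when $h \neq 1$) prevents $u$ from having the form $r^{q^n}-r$. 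Then Lemma \ref{lema cota}(a) or (b) gives a bound of the shape $(m_1+m_2+1)q^{n/2}$ per inner sum, the degree parameters $D_1,\dots,D_4$ all being at most $m_1 + m_2 + 1$ because $\deg f_1 \le m_1$, $\deg f_2 \le m_2$ and $\gcd(f_1,f_2)=1$.

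Finally I would count how many such non-principal triples there are after the character sums over $\chi_{d_1},\chi_{d_2},\psi_h$ are collapsed: summing $\sum_{d\mid e}\frac{|\mu(d)|}{\phi(d)}\sum_{\chi_d} 1 = \sum_{d \mid e, \ d \text{ squarefree}} 1 = W(e)$, and likewise $\sum_{h\mid g}\frac{|\mu'(h)|}{\Phi(h)}\sum_{\mathrm{Ord}(\psi)=h} 1 = W_q(g)$, so the total weight of the non-principal terms is at most $W(e_1)W(e_2)W_q(g) - 1$, each contributing at most $(m_1+m_2+1)q^{n/2}$ in absolute value. Assembling the main term and the error bound yields precisely the displayed inequality; and if $q^{n/2} \ge (m_1+m_2+1)W(e_1)W(e_2)W_q(g)$ the bracket is positive, so $N_f(e_1,e_2,g) > 0$. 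The main obstacle is the careful verification that the hypothesis $f \in \Upsilon_{q^n}(m_1,m_2)$ — in particular clause (iii) together with $\gcd(n,q-1)=1$ — rules out the exceptional forms in Lemma \ref{lema cota}; everything else is bookkeeping with the Möbius-type inversion formulas and degree counts.
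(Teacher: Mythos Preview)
Your proposal is correct and follows essentially the same approach as the paper: expand $N_f(e_1,e_2,g)$ via the characteristic-function formulas \eqref{funcao caracteristica} and \eqref{char-g-free}, isolate the main term, and bound the remaining character sums $\sum_\alpha \chi_1(\alpha)\chi_2(f(\alpha))\psi(\alpha)$ by Lemma~\ref{lema cota}, using condition~(iii) of Definition~\ref{upsilonset} to rule out the degenerate case. The paper organizes the error terms into five explicit cases (according to which of $\chi_1,\chi_2,\psi$ are trivial) and, as a minor technical point, takes $u(x)=x$ rather than anything built from the $\circ$-action---but your compressed treatment and the identification of the key obstacle match the paper's argument.
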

\begin{proof}
Let $f = \dfrac{f_1}{f_2} \in \Upsilon_{q^n}(m_1,m_2)$
and let
$$
S_{f}:=
\left\{ \alpha \in \F_{q^n} \mid f_1(\alpha)=0 \text{ or } f_2(\alpha)=0  
\right\} \cup \{ 0 \}.
$$

From the definition of $N_{f}(e_1,e_2,g)$ and equations \eqref{funcao 
caracteristica} and (\ref{char-g-free})	 we have
\begin{eqnarray}\label{eqNf}
N_{f}(e_1,e_2,g) & = &
\sum_{\alpha \in \F_{q^n} \backslash S_{f}}
\rho_{e_1}(\alpha) \rho_{e_2} (f(\alpha)) \kappa_g(\alpha)  \\
& = &  \nonumber
\frac{\phi(e_1)\phi(e_2) \Phi(g)}{e_1 e_2 N(g)}
\sum_{\substack{d_1\mid e_1 , d_2\mid e_2 \\ h\mid g}}
\frac{\mu(d_1)\mu(d_2) \mu'(h)}{\phi(d_1)\phi(d_2) \Phi(h)}
\sum_{\substack{\ord(\chi_1)=d_1 \\ \ord(\chi_2)=d_2 \\ \Ord(\psi)=h}}
\tilde{\bchi}_f (\chi_1,\chi_2,\psi), 
\end{eqnarray}
where
$$
\tilde{\bchi}_f (\chi_1,\chi_2,\psi)
=
\sum_{\alpha \in \F_{q^n} \backslash S_{f}}
\chi_1(\alpha) \chi_2(f(\alpha)) \psi(\alpha) .
$$

To find a bound for $N_f(e_1,e_2,g)$ we will
bound $| \tilde{\bchi}_f(\chi_{1},\chi_{2},\psi) |$,
and we consider five cases.
\begin{enumerate}
\item[(i)] We first consider the case where $\chi_1$, $\chi_2$ and
$\psi$ are trivial characters, so that
$$
\tilde{\bchi}_f(\chi_{1},\chi_{2},\psi) = \left|\mathbb{F}_{q^n} \setminus S_f 
\right|\geq
q^n - (m_1+m_2+1).
$$

\item[(ii)] Now we deal with the case where $\chi_1$ and $\chi_2$ are trivial 
multiplicative  characters, while $\psi$ is not a trivial additive character. 
It is well known that 
$\displaystyle \sum_{\alpha \in \mathbb{F}_{q^n}}\psi(\alpha)= 0$, so that
$$
|\tilde{\bchi}_f(\chi_{1},\chi_{2},\psi) | =
\left| \sum_{\alpha\in\F_{q^n}\backslash S_f} \psi (\alpha)
\right| =
\left| - \sum_{\alpha\in S_f} \psi (\alpha)
\right| \leq m_1 + m_2+1.
$$

\item[(iii)] We treat the case where $\chi_1$ is not a trivial character, while 
$\chi_2$ and $\psi$ are trivial characters. It is well known that 
$\sum_{\alpha \in \mathbb{F}_{q^n}^*}\chi_1(\alpha)= 0$, so we have 
\begin{eqnarray*}
|\tilde{\bchi}_f(\chi_{1},\chi_{2},\psi) | & =&
\left| \sum_{\alpha \in \mathbb{F}_{q^n}^*}\chi_1(\alpha)
-
\sum_{\alpha\in\mathbb{F}_{q^n}\setminus S_f}\chi_1(\alpha)
\right| =
\left| \sum_{\alpha \in S_f\setminus\{0\}}\chi_1(\alpha) \right| \\
& \leq &(m_1+m_2) < (m_1+m_2)q^{\frac{n}{2}}.
\nonumber
\end{eqnarray*}
\end{enumerate}

Before proceeding to treat the cases where we assume at most one trivial 
character, we will rewrite the expression for 
$\tilde{\bchi}_f(\chi_{1},\chi_{2},\psi)$. 

Let $\chi_1$ and $\chi_2$ be multiplicative characters of orders $d_1$ and
$d_2$, respectively, where $d_1 \, |\, e_1$ and $d_2 \, | \, e_2$
and let $\psi$ be an additive character of $\F_q$-order $h$.
Let $i \in \{1, 2\}$,
it is
well-known (see e.g. \cite[Thm.\ 5.8]{LN}) that there exists a character
$\chi$  of order $q^n -1$ and and integer
$n_i \in\{0,1,...,q-2\}$ such that $\chi_{i}(\alpha)=\chi(\alpha^{n_i})$
for all $\alpha \in \mathbb{F}_{q^n}^*$, and observe that
$n_i=0$ if and only if $\chi_i$ is a trivial character. Hence,
\begin{eqnarray*}
\tilde{\bchi}_f(\chi_{1},\chi_{2},\psi) & = &
\sum_{\alpha\in\F_{q^n}\backslash S_f}
\chi(\alpha^{n_1}f_1(\alpha)^{n_2}f_2(\alpha)^{-n_2})\psi(\alpha) \\
& = &
\sum_{\alpha\in\F_{q^n}\backslash S_f}\chi(v(\alpha))\psi(\alpha), 
\end{eqnarray*}
where $v(x)=x^{n_1}f_1(x)^{n_2}f_2(x)^{-n_2}$.

\begin{enumerate}
\item[(iv)] Now we assume that $\psi$ is a trivial additive character, while 
$\chi_2$ is not a trivial multiplicative character, so that $n_2 \neq 0$, and 
we make no assumptions 
on $\chi_1$.
To bound $\tilde{\bchi}_f(\chi_{1},\chi_{2},\psi) $
we want to use Lemma \ref{lema cota} (a), and we
start by showing that indeed we can use it. So we suppose by means of absurd
that $v(x) = \left( \frac{v_1(x)}{v_2(x)} \right)^{q^n-1}$
for some
$v_1(x),v_2(x)\in\mathbb{F}[x]$, with 
$\mbox{gcd}(v_1,v_2)=1$, then
\begin{equation*}
x^{n_1}f_1(x)^{n_2}v_2(x)^{q^n-1}=f_2(x)^{n_2}v_1^{q^n-1}(x).
\end{equation*}
Since $\frac{f_1(x)}{f_2(x)} \in \Upsilon_{q^n}
(m_1,m_2)$, there exists an
irreducible monic polynomial $t(x) \in \mathbb{F}_{q^n}[x]$, $t(x)\neq x$ and a
positive integer $a$
with $\mbox{gdc}\, (a,q^n-1)=1$ such that
$t(x)^a$ is the largest power of $t(x)$ which appears in the factorization of 
either $f_1(x)$ or $f_2(x)$. Let's
suppose that $t(x)^a$ appears in the factorization of $f_2(x)$, and let
$\tilde{t}(x)$ be an irreducible factor of $t(x)$ in $\mathbb{F}[x]$. Clearly
$\tilde{t}(x)$ has degree one, $\tilde{t}(x) \neq x$ and since 
$\mathbb{F}_{q^n}$
is a
perfect field we
know that $\tilde{t}(x)$ appears with multiplicity one in the factorization of
$t(x)$ in $\mathbb{F}[x]$.
Since $f_1(x)$ and $f_2(x)$ are coprime in $\mathbb{F}_{q^n}[x]$ they are also
coprime in  $\mathbb{F}[x]$ so
$\tilde{t}(x)^{a n_2}$ is the largest power of $\tilde{t}(x)$ which appears in 
the factorization of $v_2(x)^{q^n-1}$. From
this one may conclude that
$q^n-1 \, | \, a n_2$, and from $\gcd(a,q^n-1)=1$ we get $q^n-1 \, | \, n_2$, a
contradiction. So we must have that
$t(x)^a$ appears in the factorization of $f_1(x)$, and reasoning as above again
we
conclude again that $q^n-1\, | \, n_2$, which is impossible. Thus, if 
$n_2 \neq 0$ we get that $v(x)$ is not of the form $\left( 
\frac{v_1(x)}{v_2(x)} \right)^{q^n-1}$
in $\mathbb{F}(x)$.

Let $T_v$ be the set of $\beta \in \mathbb{F}_{q^n}$ such that $v(\beta) = 0$ or
$v(\beta)$ is not defined. If $0 \in T_v$ then $T_v = S_f$ and from Lemma
\ref{lema cota} we have
$$
\left| \tilde{\bchi}_f(\chi_{1},\chi_{2},\psi) \right|
=
\left| \sum_{\alpha\in\F_{q^n}\setminus S_f}\chi(v(\alpha))
\right| =
\left| \sum_{\alpha\in\F_{q^n}\setminus T_v}\chi(v(\alpha))
\right|
\leq (m_1 + m_2) q^{\frac{n}{2}}.
$$
If $0 \notin T_v$ then 
$$
\left| \tilde{\bchi}_f(\chi_{1},\chi_{2},\psi)\right|
=
\left| \sum_{\alpha\in\F_{q^n}\setminus S_f}\chi(v(\alpha))
\right| =
\left| \sum_{\alpha\in\F_{q^n}\setminus T_v}\chi(v(\alpha)) - \chi(v(0))
\right|,
$$
so
$\left| \tilde{\bchi}_f(\chi_{1},\chi_{2},\psi)\right|
\leq (m_1 + m_2 - 1) q^{\frac{n}{2}} + 1$ and
anyway we get $| \tilde{\bchi}_f(\chi_{1},\chi_{2},\psi) | \leq (m_1 + m_2)
q^{\frac{n}{2}}$.

%

\item[(v)] 
Lastly we consider the case where $\psi$ is not a trivial character, and either 
$\chi_1$ or $\chi_2$ is not a trivial character, so that  $d_1\neq 1$ or 
$d_2\neq 1$.
Obviously 
$x$ is not of the form $r(x)^{q^n}-r(x)$ in $\F(x)$, so we may use
Lemma \ref{lema cota} (b).

As in the above case
let $T_v$ be the set of $\beta \in \mathbb{F}_{q^n}$ such that $v(\beta) = 0$ or
$v(\beta)$ is not defined. If $0 \in T_v$ then $T_v = S_f$ and from Lemma
\ref{lema cota} we have
$$
\left| \tilde{\bchi}_f(\chi_{1},\chi_{2},\psi) \right|
=
\left| \sum_{\alpha\in\F_{q^n}\setminus T_v}\chi(v(\alpha)) \psi(\alpha)
\right| 
\leq (m_1 + m_2+1) q^{\frac{n}{2}}.
$$
If $0 \notin T_v$ then 
$$
\left| \tilde{\bchi}_f(\chi_{1},\chi_{2},\psi)\right|
 =
\left| \sum_{\alpha\in\F_{q^n}\setminus T_v}\chi(v(\alpha))\psi(\alpha) -
\chi(v(0)) \psi(\alpha)
\right| \leq (m_1 + m_2) q^{\frac{n}{2}} + 1 ,
$$
and
anyway we get $| \tilde{\bchi}_f(\chi_{1},\chi_{2},\psi) | \leq (m_1 + m_2 + 1)
q^{\frac{n}{2}}$.
\end{enumerate}

This finishes the analysis of the possibilities for the characters $\chi_1$, 
$\chi_2$ and $\psi$, and 
now we use the above estimates to bound $N_{f}(e_1,e_2,g)$. 
Let $\chi_0$ be the trivial multiplicative character and let
$\psi_0$ be the trivial additive character.
Write
$$
N_{f}(e_1,e_2,g)
=
\frac{\phi(e_1)\phi(e_2) \Phi(g)}{e_1 e_2 N(g)}
(S_1 + S_2 + S_3 + S_4 + S_5),
$$
where
$$S_1=\tilde{\bchi}_f (\chi_0,\chi_0,\psi_0),$$ 
$$
S_2=
\sum_{\substack{h\mid g \\ h \neq 1}}
\frac{\mu'(h)}{\Phi(h)}
\sum_{\substack{\Ord(\psi)=h}}
\tilde{\bchi}_f (\chi_0,\chi_0,\psi),
$$
$$
S_3=
\sum_{\substack{d_1\mid e_1  \\
		                   d_1 \neq 1}}
\frac{\mu(d_1)}{\phi(d_1)}
\sum_{\ord(\chi_1)=d_1}
\tilde{\bchi}_f (\chi_1,\chi_0,\psi_0),
$$
$$
S_4=
\sum_{\substack{d_1\mid e_1 , d_2\mid e_2 \\ 
		d_2 \neq 1}}
\frac{\mu(d_1)\mu(d_2)}{\phi(d_1)\phi(d_2)}
\sum_{\substack{\ord(\chi_1)=d_1 \\ \ord(\chi_2)=d_2}}
\tilde{\bchi}_f (\chi_1,\chi_2,\psi_0)
$$
and
$$
S_5=
\sum_{\substack{d_1\mid e_1 , d_2\mid e_2 \\ 
		   d_1 \neq 1 \text{ or } d_2 \neq 1 \\
	        1 \neq h\mid g}}
\frac{\mu(d_1)\mu(d_2) \mu'(h)}{\phi(d_1)\phi(d_2) \Phi(h)}
\sum_{\substack{\ord(\chi_1)=d_1 \\ \ord(\chi_2)=d_2 \\ \Ord(\psi)=h}}
\tilde{\bchi}_f (\chi_1,\chi_2,\psi).
$$
From what we did above and using that
there are $\phi(d_1)$ multiplicative characters of order $d_1$, $\phi(d_2)$
multiplicative characters of order $d_2$
and $\Phi(h)$ additive characters of $\F_q$-order $h$
 we get
\begin{eqnarray}
|S_2+S_3+S_4+S_5| & < &
(m_1+m_2+1)q^{\frac{n}{2}}
\left(
\sum_{\substack{d_1 \mid e_1, d_2\mid e_2, h \mid g \\ (d_1,d_2,h) \neq 
(1,1,1)}}
|\mu(d_1)| |\mu(d_2)| |\mu'(h)|
\right) \nonumber \\
& = & 
(m_1+m_2+1)q^{\frac{n}{2}}
\left(
W(e_1)W(e_2)W(g) - 1
\right). \nonumber
\end{eqnarray}
Therefore, we conclude that

\begin{equation} 
\begin{array}{lll}     \label{eqpag10}
\displaystyle N_{f}(e_1,e_2,g) & > &\dfrac{\phi(e_1)\phi(e_2) \Phi(g)}{e_1 e_2 
N(g)}
\left(
q^n-(m_1+m_2+1)- \right.  \\
& & \left. (m_1+m_2+1)q^{\frac{n}{2}}(W(e_1)W(e_2)W_q(g)-1)
\right). 
\end{array}
\end{equation}

Thus, if
\begin{eqnarray*}
q^n & \geq & (m_1+m_2+1)q^{\frac{n}{2}}(W(e_1)W(e_2)W_q(g))
 \\
      & > & (m_1+m_2+1)+(m_1+m_2+1)q^{\frac{n}{2}}(W(e_1)W(e_2)W_q(g)-1) ,
\end{eqnarray*}
then
$N_f(e_1,e_2,g)>0$.
\end{proof}

\begin{corollary}\label{mainresult}
If
$q^{\frac{n}{2}} \geq (m_1 + m_2+1) W(q^n-1)^2W_q(x^n-1)$
then 
for each $ f \in \Upsilon_{q^n} (m_1,m_2)$
there exists $\alpha \in \mathbb{F}_{q^n}$, primitive and normal
over $\mathbb{F}_q$, such that $f(\alpha) \in \mathbb{F}_{q^n}$  is also a 
primitive element.	
\end{corollary}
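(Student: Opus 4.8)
The plan is to derive Corollary~\ref{mainresult} as a direct specialization of Theorem~\ref{principal}. Recall that by the observations preceding the theorem, an element $\alpha \in \F_{q^n}$ is primitive if and only if it is $(q^n-1)$-free, and it is normal over $\F_q$ if and only if it is $(x^n-1)$-free. Hence, if we set $e_1 = e_2 = q^n - 1$ and $g = x^n - 1$, then $N_f(q^n-1, q^n-1, x^n-1)$ counts exactly the elements $\alpha \in \F_{q^n}$ which are primitive, normal over $\F_q$, and such that $f(\alpha)$ is primitive. Therefore it suffices to show that this quantity is positive for every $f \in \Upsilon_{q^n}(m_1,m_2)$.

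First I would invoke the ``a fortiori'' clause of Theorem~\ref{principal}: with the above choices of $e_1$, $e_2$, $g$, the sufficient condition $q^{n/2} \geq (m_1+m_2+1)\,W(e_1)W(e_2)W_q(g)$ becomes precisely
$$
q^{n/2} \geq (m_1+m_2+1)\, W(q^n-1)^2\, W_q(x^n-1),
$$
since $W(e_1) = W(e_2) = W(q^n-1)$ and $W_q(g) = W_q(x^n-1)$. This is exactly the hypothesis of the corollary, so Theorem~\ref{principal} yields $N_f(q^n-1, q^n-1, x^n-1) > 0$.

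The only point requiring a word of care is that $f \in \Upsilon_{q^n}(m_1,m_2)$ must genuinely be an admissible input to Theorem~\ref{principal}, and it is by the very hypothesis of the corollary; no further restriction on $f$ is needed because the bound in the theorem holds uniformly over all of $\Upsilon_{q^n}(m_1,m_2)$. There is essentially no obstacle here: the work has all been done in Theorem~\ref{principal}, and the corollary is just the translation of ``$(q^n-1)$-free'' into ``primitive'' and ``$(x^n-1)$-free'' into ``normal over $\F_q$,'' together with substituting these specific values into the character-free criterion. I would conclude by noting that since $N_f(q^n-1,q^n-1,x^n-1) > 0$ there exists at least one $\alpha \in \F_{q^n}$ with all three properties, which is the assertion of the corollary. $\Box$
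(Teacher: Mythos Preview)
Your proposal is correct and is exactly the paper's intended approach: the corollary is stated immediately after Theorem~\ref{principal} with no separate proof, since it follows at once by specializing $e_1=e_2=q^n-1$ and $g=x^n-1$ and using the equivalences primitive $\Leftrightarrow$ $(q^n-1)$-free and normal $\Leftrightarrow$ $(x^n-1)$-free.
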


The next result presents a inequality similar to others which have appeared in  
previous works about primitive or normal elements.

\begin{lemma}\label{lemmasieve}
Let $\ell$ be a divisor of $q^n-1$ and let $\{p_1,...,p_r\}$ be the set of 
	all 
	primes which divide $q^n-1$,
	but do not divide $\ell$.
	Also let $g \in \F_q[x]$ be a divisor of $x^n -1$ and $\{P_1,...,P_s\} 
	\subset \F_q[x]$
	be the set of all monic irreducible polynomials which
	divide $x^n -1$, but do not divide $g$. Then
\begin{eqnarray}
N_f(q^n-1,q^n-1,x^n-1) & \geq &
\sum_{i=1}^{r}N_f(p_i \ell,\ell,g)
+
\sum_{i=1}^{r}N_f(\ell,p_i \ell,g) \nonumber \\
& &
+\sum_{i=1}^{s}N_f( \ell,\ell, P_i g)
-
(2r+s-1)N_f(\ell,\ell,g). \label{sieve}
\end{eqnarray}
\end{lemma}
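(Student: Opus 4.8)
The plan is to obtain \eqref{sieve} from the elementary sub-additivity (Bonferroni) estimate for the size of an intersection of subsets of a finite set, after first reducing the various freeness conditions to their ``prime components''. The key preliminary observation is that, for $e \mid q^n-1$, an element $\alpha \in \F_{q^n}^*$ is $e$-free if and only if it is $\mathrm{rad}(e)$-free, and more precisely, if $d \mid q^n-1$ is squarefree and $d = q_1\cdots q_t$ is its factorization into distinct primes, then $\alpha$ is $d$-free if and only if $\alpha$ is $q_j$-free for every $j$ (the nontrivial direction uses that $\beta^{q_j d'} = \alpha$ forces $(\beta^{d'})^{q_j} = \alpha$). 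The analogous statement holds for $g$-freeness of elements of $\F_{q^n}$ with $g\mid x^n-1$ a product of distinct monic irreducibles. Since $p_i \nmid \ell$ and $P_j \nmid g$ by hypothesis, one has $p_i\ell \mid q^n-1$ and $P_j g \mid x^n-1$, and being $(q^n-1)$-free is equivalent to being $\ell$-free together with being $p_i$-free for all $i=1,\dots,r$, while being $(x^n-1)$-free is equivalent to being $g$-free together with being $P_j$-free for all $j=1,\dots,s$.

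First I would fix the base set
\[
\mathcal{A} = \{\alpha \in \F_{q^n} : \alpha \text{ is } \ell\text{-free},\ f(\alpha) \text{ is } \ell\text{-free},\ \alpha \text{ is } g\text{-free}\},
\]
so that $|\mathcal{A}| = N_f(\ell,\ell,g)$. Inside $\mathcal{A}$ I would introduce the $2r+s$ subsets: $\mathcal{A}_i$ ($1\le i\le r$) consisting of those $\alpha\in\mathcal{A}$ with $\alpha$ being $p_i$-free; $\mathcal{B}_i$ ($1\le i\le r$) consisting of those $\alpha\in\mathcal{A}$ with $f(\alpha)$ being $p_i$-free; and $\mathcal{C}_j$ ($1\le j\le s$) consisting of those $\alpha\in\mathcal{A}$ with $\alpha$ being $P_j$-free. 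By the preliminary observation, $\mathcal{A}_i=\{\alpha : \alpha \text{ is } p_i\ell\text{-free},\ f(\alpha)\text{ is }\ell\text{-free},\ \alpha\text{ is }g\text{-free}\}$, hence $|\mathcal{A}_i| = N_f(p_i\ell,\ell,g)$; likewise $|\mathcal{B}_i| = N_f(\ell,p_i\ell,g)$ and $|\mathcal{C}_j| = N_f(\ell,\ell,P_j g)$. Moreover, again by that observation, an $\alpha$ lies in all of the $\mathcal{A}_i$, $\mathcal{B}_i$, $\mathcal{C}_j$ simultaneously exactly when $\alpha$ is $(q^n-1)$-free, $f(\alpha)$ is $(q^n-1)$-free and $\alpha$ is $(x^n-1)$-free, so that the intersection of all $2r+s$ sets has $N_f(q^n-1,q^n-1,x^n-1)$ elements.

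It then remains to invoke the general inequality that, for subsets $E_1,\dots,E_t$ of a finite set $\mathcal{A}$,
\[
|E_1\cap\cdots\cap E_t| \;\ge\; \sum_{k=1}^{t} |E_k| - (t-1)|\mathcal{A}|,
\]
which follows at once from $|\mathcal{A}\setminus(E_1\cap\cdots\cap E_t)| = \big|\bigcup_{k=1}^{t}(\mathcal{A}\setminus E_k)\big| \le \sum_{k=1}^{t}\big(|\mathcal{A}| - |E_k|\big)$. Applying this with $t = 2r+s$ and with the $E_k$ ranging over the $\mathcal{A}_i$, $\mathcal{B}_i$, $\mathcal{C}_j$, and substituting the cardinalities computed above, yields precisely \eqref{sieve}. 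There is no serious obstacle here beyond the bookkeeping; the only point meriting care is the reduction to radicals, namely checking that $p_i\ell\mid q^n-1$ and $P_j g\mid x^n-1$ and that the defining conditions of $N_f$ genuinely decompose into the per-prime and per-irreducible-factor conditions — which is exactly where the hypotheses $p_i\nmid\ell$ and $P_j\nmid g$ enter.
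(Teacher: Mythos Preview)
Your proof is correct and is essentially the same counting argument as the paper's, just organized more cleanly: the paper argues directly that an $\alpha$ with all three full freeness properties is counted exactly once on the right side and any other $\alpha$ is counted at most zero times, which is precisely the content of the Bonferroni-type inequality $|E_1\cap\cdots\cap E_t|\ge\sum_k|E_k|-(t-1)|\mathcal{A}|$ that you invoke. Your explicit reduction to radicals and identification of the subsets $\mathcal{A}_i,\mathcal{B}_i,\mathcal{C}_j$ makes the bookkeeping more transparent than in the paper's brief proof, but the substance is identical.
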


\begin{proof}
The left side of \eqref{sieve} counts every $\alpha \in \F_{q^n}$ for which $\alpha$ is primitive and normal, and $f(\alpha)$ is primitive.
Observe that if $\alpha$ is primitive and normal then 
$\alpha$ is $\ell$-free, $g$-free, $p_i \ell$-free for all $i \in \{ 1,\ldots , 
r\}$ and $P_i g$-free for all $i \in \{ 1,\ldots , s\}$, also if 
$f(\alpha)$ is primitive then $f(\alpha)$ is $\ell$-free and 
$p_i \ell$-free for all $i \in \{ 1,\ldots , r\}$. This shows that 
if $\alpha$ is primitive and normal and $f(\alpha)$ is primitive then $\alpha$ 
is counted $2r+s - (2r+s-1)=1$
times on the right side of \eqref{sieve}.
For any other $\alpha \in \F_{q^n}$, we have that either $\alpha$ or 
$f(\alpha)$ is not $p_i \ell$-free for some $i \in \{ 1,\ldots , r\}$, or 
$\alpha$ is not $P_i g$-free for some $i \in \{ 1,\ldots , s\}$, so $\alpha$ 
will not be counted in at least one of the first three sums of \eqref{sieve}.
\end{proof}

The next result will be useful in the calculations which appear in the next 
section.

\begin{lemma} \label{divisores}
Let $\ell$ be a divisor of $q^n-1$ and let $\{p_1,...,p_r\}$ be the set of all 
primes which divide $q^n-1$,
but do not divide $\ell$.
Also let $g \in \F_q[x]$ be a divisor of $x^n -1$ and $\{P_1,...,P_s\} \subset 
\F_q[x]$
be the set of all monic irreducible polynomials which
divide $x^n -1$, but do not divide $g$.
Suppose that
$$
\delta=1-2\sum_{i=1}^{r}\frac{1}{p_i} - \sum_{i=1}^{s}\frac{1}{q^{\deg P_i}} >0
$$
and let  $\Delta=\frac{2r+s-1}{\delta}+2$. If 
$q^{\frac{n}{2}} \geq 
(m_1 + m_2+1)
W(\ell)^2W_q(g) \Delta$, then 
for each $ f \in \Upsilon_{q^n} (m_1,m_2)$
there exists $\alpha \in \mathbb{F}_{q^n}$, primitive and normal
over $\mathbb{F}_q$, such that $f(\alpha) \in \mathbb{F}_{q^n}$  is also a 
primitive element.	
\end{lemma}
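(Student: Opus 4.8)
The plan is to combine the sieve inequality from Lemma \ref{lemmasieve} with the lower bound from Theorem \ref{principal}, applied to each of the terms appearing on the right-hand side of \eqref{sieve}. Since every divisor of the form $p_i\ell$ (resp. $P_i g$) satisfies $W(p_i\ell)=2W(\ell)$ (resp. $W_q(P_i g)=2W_q(g)$), and each term $N_f(\cdot,\cdot,\cdot)$ shares the common positive factor $\frac{\phi(\ell)\phi(\ell)\Phi(g)}{\ell^2 N(g)}$ up to the obvious adjustments — but one must be slightly careful, since $N_f(p_i\ell,\ell,g)$ carries the factor $\phi(p_i\ell)/(p_i\ell)=\theta(\ell)/p_i\cdot p_i/(p_i)\cdots$; more precisely $\theta(p_i\ell)=\theta(\ell)$ when $p_i\mid \ell$, but here $p_i\nmid\ell$, so $\theta(p_i\ell)=\theta(\ell)(1-1/p_i)<\theta(\ell)$. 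The cleanest route is to factor out $\theta(\ell)^2\Phi(g)/N(g)$ from every term and bound each resulting bracket. I would first write, for the three ``good'' families of terms, the bound from \eqref{eqpag10}, namely
\[
N_f(p_i\ell,\ell,g) \;>\; \frac{\phi(p_i\ell)\phi(\ell)\Phi(g)}{p_i\ell^2 N(g)}\Bigl(q^n - (m_1+m_2+1)q^{n/2}\cdot 2W(\ell)^2 W_q(g)\Bigr),
\]
and likewise for the $P_i g$ terms (with $2W_q(g)$ in place of $W_q(g)$), while for the subtracted term $N_f(\ell,\ell,g)$ I would use the trivial upper bound $N_f(\ell,\ell,g) \le \frac{\phi(\ell)^2\Phi(g)}{\ell^2 N(g)}q^n$ (it counts a subset of $\F_{q^n}$, or one may even use $q^n - |S_f|$, but $q^n$ suffices).

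Next I would substitute these into \eqref{sieve}. After pulling out the common factor $\theta(\ell)^2\Phi(g)/N(g)$, the right-hand side becomes a sum in which the $q^n$-contributions add up to
\[
\sum_{i=1}^r \frac{2(1-1/p_i)}{1}\,q^n \;+\; \sum_{i=1}^s \frac{1}{q^{\deg P_i}}\cdot\text{(something)}\;-\;(2r+s-1)q^n,
\]
and here the point is that the $\theta$-ratios are designed so the leading terms collapse to exactly $\delta\, q^n$: indeed $\theta(p_i\ell)/\theta(\ell) = 1-1/p_i$ contributes to each of the two $\sum_i$ over primes, and $\Phi(P_i g)N(g)/(N(P_i g)\Phi(g)) = 1 - q^{-\deg P_i}$ for the polynomial terms, so collecting the coefficient of $q^n$ gives
\[
\Bigl(\textstyle\sum_{i=1}^r 2(1-\tfrac1{p_i}) + \sum_{i=1}^s (1-q^{-\deg P_i})\Bigr) - (2r+s-1) \;=\; 1 - 2\sum_{i=1}^r\tfrac1{p_i} - \sum_{i=1}^s q^{-\deg P_i} \;=\;\delta.
\]
Meanwhile every error term carries a factor $(m_1+m_2+1)q^{n/2}\cdot 2W(\ell)^2W_q(g)$ (from the $p_i\ell$ and $P_ig$ terms), and summing over the $2r+s$ good terms and bounding the $\theta$-ratios crudely by $1$ gives a total error at most $(2r+s)(m_1+m_2+1)q^{n/2}\cdot 2W(\ell)^2W_q(g)$; being a bit more careful, the structure matching $\Delta = \frac{2r+s-1}{\delta}+2$ shows the error is dominated by $(2r+s-1+2\delta)(m_1+m_2+1)q^{n/2}W(\ell)^2W_q(g)=\delta\Delta(m_1+m_2+1)q^{n/2}W(\ell)^2W_q(g)$. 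Hence
\[
N_f(q^n-1,q^n-1,x^n-1) \;>\; \frac{\theta(\ell)^2\Phi(g)}{N(g)}\,\delta\Bigl(q^n - \Delta(m_1+m_2+1)q^{n/2}W(\ell)^2W_q(g)\Bigr),
\]
which is positive precisely when $q^{n/2} \ge \Delta(m_1+m_2+1)W(\ell)^2W_q(g)$, giving the claim via the observation (already in the text) that primitive-and-normal with $f(\alpha)$ primitive is exactly $N_f(q^n-1,q^n-1,x^n-1)>0$.

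The step I expect to require the most care is the bookkeeping of the constant in front of $q^{n/2}$: one must track the $W$-factors through $W(p_i\ell)=2W(\ell)$ and $W_q(P_ig)=2W_q(g)$, verify that the $\theta$- and $\Phi$-ratios are bounded by $1$ so they only help (never hurt) in the error term, and then check that the combination $2r+s$ good terms plus one subtracted term reorganizes exactly into the factor $\Delta=\frac{2r+s-1}{\delta}+2$ rather than a weaker constant; equivalently one writes the full right-hand side of \eqref{sieve} as $\frac{\theta(\ell)^2\Phi(g)}{N(g)}\bigl(\delta q^n - E\bigr)$ and shows $E \le \delta\Delta(m_1+m_2+1)q^{n/2}W(\ell)^2W_q(g)$. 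Everything else — the collapse of the $q^n$-coefficients to $\delta$, and the final ``a fortiori'' inequality — is a direct computation once the bounds of Theorem \ref{principal} and Lemma \ref{lemmasieve} are in hand.
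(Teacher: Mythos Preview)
Your overall architecture is right: combine the sieve inequality (Lemma~\ref{lemmasieve}) with the character-sum estimates underlying Theorem~\ref{principal}. However, there are two genuine problems that prevent your argument from proving the lemma as stated.

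\medskip
\textbf{The ``trivial upper bound'' is not trivial.} You write that $N_f(\ell,\ell,g)\le \frac{\phi(\ell)^2\Phi(g)}{\ell^2 N(g)}\,q^n$ is trivial because ``it counts a subset of $\F_{q^n}$''. But the subset argument only gives $N_f(\ell,\ell,g)\le q^n$; the extra factor $\theta(\ell)^2\Phi(g)/N(g)$ is \emph{not} free. If you use the genuinely trivial bound $q^n$ for the subtracted term, the $q^n$-coefficients in the sieve no longer collapse to $\delta$ after factoring out $\theta(\ell)^2\Phi(g)/N(g)$, and the whole computation collapses. A correct upper bound of the shape you want does follow from the same analysis as Theorem~\ref{principal} (namely $N_f(\ell,\ell,g)\le \theta(\ell)^2\frac{\Phi(g)}{N(g)}\bigl(q^n + (m_1+m_2+1)q^{n/2}(W(\ell)^2W_q(g)-1)\bigr)$), but it is not trivial and it carries its own error term, which you have not accounted for.

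\medskip
\textbf{Bounding each sieve term separately does not reach the constant $\Delta$.} Even with the correct two-sided bounds from Theorem~\ref{principal}, your approach loses a constant factor. Applying the lower bound \eqref{eqpag10} to $N_f(p_i\ell,\ell,g)$ produces an error of size $(2W(\ell)^2W_q(g)-1)$ (since $W(p_i\ell)=2W(\ell)$), and the upper bound for the subtracted term contributes another $(W(\ell)^2W_q(g)-1)(2r+s-1)$. If you carry the algebra through, using $2\sum_i\theta(p_i)+\sum_j\Phi(P_j)/N(P_j)=2r+s-1+\delta$, the total $q^{n/2}$-error has coefficient
\[
(2W^2-1)(2r+s-1+\delta)+(W^2-1)(2r+s-1)\;=\;(3W^2-2)(2r+s-1)+(2W^2-1)\delta,
\]
with $W^2:=W(\ell)^2W_q(g)$. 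This is strictly larger than the $(2r+s-1+2\delta)W^2=\delta\Delta W^2$ needed for the stated conclusion whenever $W^2\ge 2$ and $2r+s\ge 2$. Your sentence ``being a bit more careful, the structure matching $\Delta$ shows \ldots'' is precisely the step that fails: no amount of bookkeeping recovers the missing factor from separate bounds.

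\medskip
\textbf{What the paper does instead.} The paper does not bound the sieve terms individually. It first rewrites the right side of \eqref{sieve} as
\[
\delta\,N_f(\ell,\ell,g)\;+\;\sum_{i}\bigl(N_f(p_i\ell,\ell,g)-\theta(p_i)N_f(\ell,\ell,g)\bigr)+\text{(symmetric terms)},
\]
and then bounds each \emph{difference} by returning to the character-sum expression \eqref{eqNf}. Because $\theta(p_i\ell)=\theta(p_i)\theta(\ell)$, the terms with $d_1\mid\ell$ cancel \emph{exactly}, and only the $d_1$ with $p_i\mid d_1\mid p_i\ell$ survive; these are in bijection with divisors of $\ell$, so there are only $W(\ell)^2W_q(g)$ of them (not $2W(\ell)^2W_q(g)$). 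This exact cancellation yields
\[
\bigl|N_f(p_i\ell,\ell,g)-\theta(p_i)N_f(\ell,\ell,g)\bigr|\le (m_1+m_2+1)\,\theta(p_i)\theta(\ell)^2\tfrac{\Phi(g)}{N(g)}\,W(\ell)^2W_q(g)\,q^{n/2},
\]
and summing these (together with the lower bound for $\delta N_f(\ell,\ell,g)$ from \eqref{eqpag10}) is what produces precisely the factor $\delta\Delta$. This cancellation-in-the-difference is the missing idea in your proposal.
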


\begin{proof}
From \ref{lemmasieve}
we have that
\begin{eqnarray} \label{sieveineq}
N_f(q^n-1,q^n-1,x^n-1) & \geq &
\sum_{i=1}^{r}N_f(p_i \ell,\ell,g)
+
\sum_{i=1}^{r}N_f(\ell,p_i \ell,g)  \\
& &
+\sum_{j=1}^{s}N_f( \ell,\ell, P_j g)
-
(2r+s-1)N_f(\ell,\ell,g). \nonumber
\end{eqnarray}

Since $\theta(p_i) = \dfrac{\phi(p_i)}{p_i} = 1 - \dfrac{1}{p_i}$, for all $i = 
1, \ldots, r$, and $\dfrac{\Phi(P_j)}{N(P_j)} = 1 - \dfrac{1}{q^{\deg P_j}}$, 
for all $j=1,\ldots , s$, we can rewrite the right side of the inequality above 
obtaining

\begin{equation*}
\begin{split}
N_f(q-1,q-1,x^n-1)\geq & \sum_{i=1}^{r}(N_f(p_i \ell,\ell,g) - 
\theta(p_i)N_f(\ell,\ell,g)) \\& +\sum_{i=1}^{r}(N_f(\ell,p_i\ell,g) - 
\theta(p_i)N_f(\ell,\ell,g))  + \\ & \sum_{j=1}^{s} (N_{f}(\ell, \ell , P_j g) 
- 
\dfrac{\Phi(P_j)}{N(P_j)} N_f (\ell,\ell,g))  + \delta N_f(\ell,\ell,g).
\end{split}
\end{equation*}

From  $\theta(\ell p_i) = \theta(p_i) \theta(\ell)$ and \eqref{eqNf} 
we get
\begin{equation}
N_{f}(p_i \ell,\ell,g) = \frac{\phi(p_i \ell)\phi(\ell) \Phi(g)}{p_i \ell \ell 
N(g)}
\sum_{\substack{d_1\mid p_i \ell , d_2\mid \ell \\ h\mid g}}
\frac{\mu(d_1)\mu(d_2) \mu'(h)}{\phi(d_1)\phi(d_2) \Phi(h)}
\sum_{\substack{\ord(\chi_1)=d_1 \\ \ord(\chi_2)=d_2 \\ \Ord(\psi)=h}}
\tilde{\bchi}_f (\chi_1,\chi_2,\psi)
\end{equation}
for all $i = 1, \ldots, r$. Also for all $i = 1, \ldots, r$
we split the set of $d_1$'s which divide $p_i \ell$ into two sets: the first 
one contains those which do not have $p_i$ as a factor, while the second
one contains those which are a multiple of $p_i$. This will split the first 
summation into two sums, so that
\begin{eqnarray*}
N_{f}(p_i \ell,\ell,g) & = & \\
& & \frac{\phi(p_i \ell)\phi(\ell) \Phi(g)}{p_i \ell \ell N(g)}
\sum_{\substack{d_1\mid \ell , d_2\mid \ell \\ h\mid g}}
\frac{\mu(d_1)\mu(d_2) \mu'(h)}{\phi(d_1)\phi(d_2) \Phi(h)}
\sum_{\substack{\ord(\chi_1)=d_1 \\ \ord(\chi_2)=d_2 \\ \Ord(\psi)=h}}
\tilde{\bchi}_f (\chi_1,\chi_2,\psi) \\
& + & \frac{\phi(p_i \ell)\phi(\ell) \Phi(g)}{p_i \ell \ell N(g)}
\sum_{\substack{p_i | d_1 , d_1\mid p_i \ell , d_2\mid \ell \\ h\mid g}}
\frac{\mu(d_1)\mu(d_2) \mu'(h)}{\phi(d_1)\phi(d_2) \Phi(h)}
\sum_{\substack{\ord(\chi_1)=d_1 \\ \ord(\chi_2)=d_2 \\ \Ord(\psi)=h}}
\tilde{\bchi}_f (\chi_1,\chi_2,\psi)
\end{eqnarray*}
and from the expression for $N_f(\ell,\ell,g)$ (see \eqref{eqNf}) we 
get
\begin{equation*}
\begin{split}
N_f(p_i \ell,\ell,g) -& \theta(p_i) N_f(\ell, \ell,g) \\& = \frac{\phi(p_i 
\ell)\phi(\ell) \Phi(g)}{p_i \ell \ell N(g)}
\sum_{\substack{p_i | d_1 , d_1\mid p_i \ell , d_2\mid \ell \\ h\mid g}}
\frac{\mu(d_1)\mu(d_2) \mu'(h)}{\phi(d_1)\phi(d_2) \Phi(h)}
\sum_{\substack{\ord(\chi_1)=d_1 \\ \ord(\chi_2)=d_2 \\ \Ord(\psi)=h}}
\tilde{\bchi}_f (\chi_1,\chi_2,\psi).
\end{split}
\end{equation*}
	
From (iv) and (v) in the proof of Theorem \ref{principal}, and from
$$
\displaystyle \sum_{\substack{p_i | d_1 , d_1\mid p_i \ell , d_2\mid \ell \\ 
h\mid g}}
|\mu(d_1)| |\mu(d_2)| |\mu'(h)| = W(l)^2 W_{q}(g) 
$$
we conclude that
\begin{equation} \label{N1}
\left| N_f(p_i \ell,\ell,g) - \theta(p_i) N_f(\ell, \ell,g) \right| \leq
(m_1 + m_2 +1) \theta(p_i) \theta(\ell)^2 \dfrac{\Phi(g)}{N(g)}  W(\ell)^2 
W_{q}(g) q^{\frac{n}{2}}.
\end{equation}

In a similar way, we get
\begin{equation} \label{N2}
\left| N_f( \ell,p_i \ell,g) - \theta(p_i) N_f(\ell, \ell,g) \right| \leq
(m_1 + m_2 +1) \theta(p_i) \theta(\ell)^2  \dfrac{\Phi(g)}{N(g)} W(\ell)^2 
W_{q}(g) q^{\frac{n}{2}},
\end{equation}
for all $i=1,\ldots,r$.

Again from  (\ref{eqNf}) and using that $\Phi(P_j g) = \Phi(P_j) \Phi(g)$ 
and $N(P_j g) = N(P_j)N(g)$ we get
\begin{equation*}
N_{f}(\ell,\ell,P_jg) = \frac{\phi(\ell)\phi(\ell) \Phi(P_j) \Phi(g)}{\ell \ell 
N(P_j)N(g)}
\sum_{\substack{d_1\mid \ell , d_2\mid \ell \\ h\mid P_j g}}
\frac{\mu(d_1)\mu(d_2) \mu'(h)}{\phi(d_1)\phi(d_2) \Phi(h)}
\sum_{\substack{\ord(\chi_1)=d_1 \\ \ord(\chi_2)=d_2 \\ \Ord(\psi)=h}}
\tilde{\bchi}_f (\chi_1,\chi_2,\psi).
\end{equation*}

Splitting the set of $h$'s which divide $P_j g$ into two sets, namely, the 
first 
one contains those which do not have $P_j$ as a factor, while the second
one contains those which are a multiple of $P_j$, we have
\begin{eqnarray*}
N_{f}(\ell,\ell,P_j g) & = & \\
& & \frac{\phi(\ell)\phi(\ell) \Phi(P_j) \Phi(g)}{\ell \ell N(P_j)N(g)}
\sum_{\substack{d_1\mid \ell , d_2\mid \ell \\ h\mid g}}
\frac{\mu(d_1)\mu(d_2) \mu'(h)}{\phi(d_1)\phi(d_2) \Phi(h)}
\sum_{\substack{\ord(\chi_1)=d_1 \\ \ord(\chi_2)=d_2 \\ \Ord(\psi)=h}}
\tilde{\bchi}_f (\chi_1,\chi_2,\psi) \\
& + & \frac{\phi(\ell)\phi(\ell) \Phi(P_j) \Phi(g)}{\ell \ell N(P_j)N(g)}
\sum_{\substack{ d_1\mid \ell , d_2\mid \ell \\ P_j \mid h , h\mid P_jg}}
\frac{\mu(d_1)\mu(d_2) \mu'(h)}{\phi(d_1)\phi(d_2) \Phi(h)}
\sum_{\substack{\ord(\chi_1)=d_1 \\ \ord(\chi_2)=d_2 \\ \Ord(\psi)=h}}
\tilde{\bchi}_f (\chi_1,\chi_2,\psi).
\end{eqnarray*}

From the expression for $N_f(\ell,\ell,g)$ (see \eqref{eqNf}) we get
\begin{equation*}
\begin{split}
N_{f}(\ell, \ell , P_j g) -& \Phi(P_j) N_f (\ell,\ell,g) \\ &= 
\frac{\phi(\ell)\phi(\ell) \Phi(P_j) \Phi(g)}{\ell \ell N(P_j)N(g)}
\sum_{\substack{ d_1\mid \ell , d_2\mid \ell \\ P_j \mid h , h\mid P_jg}}
\frac{\mu(d_1)\mu(d_2) \mu'(h)}{\phi(d_1)\phi(d_2) \Phi(h)}
\sum_{\substack{\ord(\chi_1)=d_1 \\ \ord(\chi_2)=d_2 \\ \Ord(\psi)=h}}
\tilde{\bchi}_f (\chi_1,\chi_2,\psi).
\end{split}
\end{equation*}

From (ii) and (v) in the proof of Theorem \ref{principal} and from
$$
\displaystyle \sum_{\substack{d_1\mid \ell , d_2\mid \ell \\ P_j \mid h , h\mid 
P_j g}}
|\mu(d_1)| |\mu(d_2)| |\mu'(h)| = W(l)^2 W_{q}(g) 
$$
we conclude that
\begin{equation} \label{N3}
\left| N_f( \ell,\ell,P_j g) - \Phi(P_j) N_f(\ell, \ell,g) \right| \leq
(m_1 + m_2 +1) \dfrac{\Phi(g)}{N(g)} \dfrac{\Phi(P_j)}{N(P_j)} \theta(\ell)^2  
W(\ell)^2 W_{q}(g) q^{\frac{n}{2}}.
\end{equation}

So, replacing the results from (\ref{N1}), (\ref{N2}) and (\ref{N3}) in 
 (\ref{sieveineq}) we have that
\begin{equation*}
\begin{split}
N_f(& q^n-1, q^n-1, x^n-1)\\ &\geq  \delta N_{f}(\ell, \ell , g)  
 - ((m_1 + m_2) \theta(\ell)^2  \dfrac{\Phi(g)}{N(g)} W(\ell)^2 W_{q}(g) 
 q^{\frac{n}{2}}) (2\sum_{i=1}^{r} \theta(P_i) + \sum_{j=1}^{s} 
 \dfrac{\Phi(P_j)}{N(P_j)}) \\ 
 &= \delta N_{f}(\ell, \ell , g) \\
  & \textrm{\hspace{1.5cm}} -  ((m_1 + m_2) \theta(\ell)^2 
  \dfrac{\Phi(g)}{N(g)}  
 W(\ell)^2 
 W_{q}(g) 
 q^{\frac{n}{2}}) (2r - s - 2 \sum_{i=1}^{r} \dfrac{1}{p_i} - \sum_{j=1}^{s} 
 \dfrac{1}{q^{\deg P_j}})  \\
    &=\delta N_{f}(\ell, \ell , g) - ((m_1 + m_2) \theta(\ell)^2 
    \dfrac{\Phi(g)}{N(g)}  W(\ell)^2 W_{q}(g) q^{\frac{n}{2}})(\delta(\Delta - 
    1)).
\end{split}
\end{equation*}

From \eqref{eqpag10} we get
\begin{eqnarray*}
\displaystyle N_{f}(\ell,\ell,g) & > &\theta(\ell)^2 \dfrac{\Phi(g)}{ N(g)}
\left(
q^n-(m_1+m_2+1)- \right. \nonumber \\
& & \left. (m_1+m_2+1)q^{\frac{n}{2}}(W(\ell)^2W_q(g)-1)
\right).
\end{eqnarray*}

Thus,
\begin{eqnarray*}
N_f(q^n-1,q^n-1,x^n-1) & \geq  & \delta \theta(\ell)^2 \dfrac{\Phi(g)}{N(g)} [ q^n
- (m_1 + m_2 + 1) \Delta W(\ell)^2 W_{q}(g)  q^{\frac{n}{2}} \\
& &
 + (m_1 + m_2 + 1) q^{\frac{n}{2}} - (m_1 + m_2 + 1)  ].
\end{eqnarray*}

From the hypothesis we have $\delta>0$, and since $q^{\frac{n}{2}}(m_1 + m_2 + 
1) - 
(m_1 + m_2 + 1)>0$, we conclude that if $q^n > q^{}\frac{n}{2} W(\ell)^2 
W_{q}(g) (1 + m_1 + m_2)\Delta$, then $N_f(q^n-1,q^n-1,x^n-1)>0$.
\end{proof}


\begin{definition} \label{conjunto-B}
For  positive integers
$m_1$ and $m_2$ let  $\Bb(m_1,m_2)$
	be the set of pairs $(q,n) \in \mathbb{N}^2$, with $q$ a prime power, 
	such that for each $f \in \Upsilon_{q^n} (m_1,m_2)$ there exists a 
	primitive
	element
	$\alpha \in \F_{q^n}$, normal over $\F_q$, with
	$f(\alpha)$ primitive in $\F_{q^n}$.
\end{definition}
Note that if $n_1\leq m_1$ and $n_2 \leq m_2$ then
$\Bb(m_1,m_2) \subset \Bb(n_1,n_2)$.
We finish this section by proving that 
there exists only a finite number
of pairs $(q,n) \in \N^2$ such that $q$ is a prime power and
$(q,n) \notin \Bb(m_1,m_2)$. 
For this,
we will need the following result, which is modeled after
\cite[Lemma 3.3]{CH} and
\cite[Lemma 4.1]{Kapetanakis-Reis} and, like these results, is proved using the 
multiplicativity of the function $W(\cdot)$ and the fact that if a positive 
integer $M$ has $s$ distinct prime divisors then $W(M) = 2^s$.

\begin{lemma}\label{cota-t}
	Let $M$ be a positive integer and $t$ be a positive real number.
	Then
	$W(M)
	\leq A_{t,M} \cdot M^{\frac{1}{t}}$,
	where
	$$
	A_{t,M}=\prod_{\substack{\wp < 2^t \\ \wp \text{ is prime}
	                  \\ \wp \mid M}}
	\frac{2}{\sqrt[t]{\wp}}.
	$$
\end{lemma}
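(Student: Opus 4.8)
The plan is to exploit the two facts highlighted just before the statement: that $W$ is multiplicative and that $W(M)=2^{s}$ when $M$ has exactly $s$ distinct prime divisors. First I would write the prime factorization $M=\prod_{\wp\mid M}\wp^{a_\wp}$, the product ranging over the primes $\wp$ dividing $M$, each with exponent $a_\wp\geq 1$. Then $W(M)=\prod_{\wp\mid M}2$, while $M^{1/t}=\prod_{\wp\mid M}\wp^{a_\wp/t}\geq\prod_{\wp\mid M}\wp^{1/t}$, since $a_\wp\geq 1$ and the map $x\mapsto x^{1/t}$ is increasing on the positive reals for $t>0$. Dividing, I obtain the key inequality $W(M)\,M^{-1/t}\leq\prod_{\wp\mid M}\tfrac{2}{\sqrt[t]{\wp}}$.

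Next I would split the product on the right according to whether a prime $\wp$ dividing $M$ satisfies $\wp<2^{t}$ or $\wp\geq 2^{t}$. For a prime $\wp\geq 2^{t}$ one has $\sqrt[t]{\wp}\geq 2$, hence $\tfrac{2}{\sqrt[t]{\wp}}\leq 1$; discarding all such factors can only increase the product, so $\prod_{\wp\mid M}\tfrac{2}{\sqrt[t]{\wp}}\leq\prod_{\wp\mid M,\ \wp<2^{t}}\tfrac{2}{\sqrt[t]{\wp}}=A_{t,M}$. Chaining this with the displayed inequality above yields $W(M)\leq A_{t,M}\cdot M^{1/t}$, which is exactly the assertion.

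There is no genuine obstacle here; the argument is a two-line manipulation built entirely on multiplicativity of $W$. The only points deserving a sentence of care are the degenerate case $M=1$, where both sides equal $1$ under the convention that an empty product is $1$, and the justification that $M^{1/t}\geq\prod_{\wp\mid M}\wp^{1/t}$, which rests precisely on each $a_\wp$ being at least $1$. Everything else is bookkeeping, and the statement holds for every positive real $t$ without restriction.
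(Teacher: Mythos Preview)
Your argument is correct and follows exactly the approach the paper indicates: using multiplicativity and the identity $W(M)=2^{s}$ to bound $W(M)/M^{1/t}$ by the product of $2/\wp^{1/t}$ over prime divisors of $M$, then discarding the factors with $\wp\geq 2^{t}$ since they are at most $1$. The paper does not spell out more than this hint, and you have filled in the details accurately, including the edge case $M=1$.
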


\noindent
In many places we will use the possibly worse bound

\begin{equation}  \label{eq3} 
	W(M)
	\leq A_t \cdot M^{\frac{1}{t}},
\end{equation}
	where
	$$
	A_t = \prod_{\substack{\wp < 2^t \\ \wp \text{ is prime}}}
	\frac{2}{\sqrt[t]{\wp}}.
	$$

\noindent
We come to the  last result in this section.

\begin{proposition}\label{Asymptotic}
	There exists only a finite number
	of pairs $(q,n) \in \N^2$ such that $q$ is a prime power and
	$(q,n) \notin \Bb(m_1,m_2)$.
\end{proposition}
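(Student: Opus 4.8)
The plan is to use the quantitative criterion from Corollary \ref{mainresult} together with the estimate \eqref{eq3} from Lemma \ref{cota-t} to reduce the statement to showing that the inequality
\[
q^{n/2} \geq (m_1+m_2+1)\,W(q^n-1)^2\,W_q(x^n-1)
\]
holds for all but finitely many pairs $(q,n)$. The two multiplicative quantities on the right both grow very slowly: by \eqref{eq3}, for any fixed real $t>0$ we have $W(q^n-1) \leq A_t (q^n-1)^{1/t} < A_t\, q^{n/t}$, while $W_q(x^n-1) \leq 2^{\sigma(n)}$ where $\sigma(n)$ denotes the number of distinct monic irreducible factors of $x^n-1$ over $\F_q$; in particular $W_q(x^n-1) \leq W(n)\cdot 2^{?}$ is crudely bounded by $2^n$ — but much better, since the number of distinct irreducible factors of $x^n-1$ is at most the number of divisors of $n$ times something, and in any case $W_q(x^n-1) \leq 2^{d(n)}$-type bounds give $W_q(x^n-1) = q^{o(n)}$, and even the trivial bound $W_q(x^n-1) \le 2^n \le q^n$ suffices after an extra argument. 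So I would substitute these bounds to get that a sufficient condition for $(q,n) \in \Bb(m_1,m_2)$ is
\[
q^{n/2} \geq (m_1+m_2+1)\, A_t^2\, q^{2n/t}\, W_q(x^n-1),
\]
i.e.
\[
q^{\,n(1/2 - 2/t)} \geq (m_1+m_2+1)\, A_t^2\, W_q(x^n-1).
\]

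Now I would handle the factor $W_q(x^n-1)$. The key point is that $x^n-1$ has at most $n$ distinct irreducible factors over $\F_q$, but more usefully, writing $n = p^a n'$ with $\gcd(n',p)=1$, the distinct irreducible factors of $x^n-1$ coincide with those of $x^{n'}-1$, and their number is at most $d(n') \le d(n)$ summed appropriately; in fact the number of distinct irreducible factors of $x^{n'}-1$ equals $\sum_{d\mid n'} \phi(d)/\ord_d(q) \le \sum_{d \mid n'} 1 = d(n')$ — wait, that is not right either, since $\phi(d)/\ord_d(q)$ can exceed $1$. The cleanest route: the number of distinct irreducible factors of $x^n-1$ over $\F_q$ is at most $n$, hence $W_q(x^n-1) \le 2^n$. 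That is too weak on its own, so instead I would split into two regimes. First fix $q$ (finitely many small values will need the asymptotics in $n$); for $q$ fixed and $n \to \infty$, the left side $q^{n(1/2-2/t)}$ grows exponentially in $n$ once $t > 4$, so I must only beat $2^n$, i.e. choose $q$-dependent $t$; this works as soon as $q^{1/2 - 2/t} > 2$, which holds for $q \ge 5$ with $t$ large and needs separate (still finite-in-$n$, by a sharper count of irreducible factors — namely $W_q(x^n-1) \le 2^{\tau}$ with $\tau = O(n/\log_q n)$ or simply using that most cyclotomic cosets are large) treatment for $q \in \{2,3,4\}$. Second, allow $q \to \infty$: here take $t = 8$, so the left side is $q^{n/4}$ and $A_8$ is an absolute constant; since $W_q(x^n-1) \le 2^{\sigma}$ where $\sigma \le $ (number of divisors of $n$) $\cdot$ (bound independent of $q$) — actually the number of distinct irreducible factors is at most $\sum_{d\mid n}\phi(d) = n$ again, but crucially $W_q(x^n-1) \le W(n) \cdot (\text{const})$ is false; I will instead note that for $q$ large relative to $n$ the number of distinct irreducible factors is exactly $d(n)$ (each cyclotomic coset mod $d$ for $d \mid n$, $p\nmid d$, has size $\ord_d(q)$, summing to... no).

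Let me restate the genuinely correct mechanism, which is the standard one in this literature. Use $W_q(x^n-1) \le 2^n$, and separately bound: from \eqref{eq3} with a well-chosen $t$, $W(q^n-1) \le A_t q^{n/t}$; then the target inequality is implied by $q^{n/2} \ge (m_1+m_2+1) A_t^2 q^{2n/t} 2^n$. Choosing $t$ large enough that $1/2 - 2/t > 0$, say $t = 16$ giving exponent $3/8$, this reads $q^{3n/8} \ge (m_1+m_2+1) A_{16}^2 2^n$, i.e. $(q^{3/8}/2)^n \ge (m_1+m_2+1) A_{16}^2$. For $q \ge 2^{8/3} $, i.e. $q \ge 7$ (since $2^{8/3} \approx 6.35$), the base exceeds $1$, so the inequality holds for all $n$ beyond a bound depending only on $q$ — and since we also need, for the finitely many exceptional small $q \in \{2,3,4,5\}$, a bound on $n$, I would for those invoke a sharper estimate of $W_q(x^n-1)$: the number of distinct irreducible factors of $x^{n}-1$ over $\F_q$ is $O(\log n)$ for $n$ a power of a single prime times controllably more in general — more honestly, one uses that $W_q(x^n-1) \le W(n)\,q^{\,\Omega}$ is not needed; rather, since $x^n - 1 \mid x^{q^n-1}-1$ and the number of irreducible factors of degree $\le D$ over $\F_q$ is $O(q^D)$, one shows the number of distinct irreducible factors of $x^n-1$ is $\le n^{o(1)} \cdot$(something), giving $W_q(x^n-1) = q^{o(n)}$. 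Then for each fixed small $q$, $q^{3n/8} = q^{(3/8)n}$ dominates $(m_1+m_2+1) A_{16}^2 q^{o(n)}$ for large $n$. Either way, one concludes: there is a bound $N_0 = N_0(m_1,m_2)$ and a bound $Q_0 = Q_0(m_1,m_2)$ such that $(q,n) \in \Bb(m_1,m_2)$ whenever $q \ge Q_0$, and for each of the finitely many $q < Q_0$, whenever $n \ge N_0(q)$. Since for $n \le 2$ everything is already covered by the remark preceding Theorem \ref{principal} (reduction to \cite{CSS}), the complement of $\Bb(m_1,m_2)$ is contained in the finite set $\{(q,n) : q < Q_0,\ 3 \le n < N_0(q)\}$, which proves the proposition.

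The main obstacle is controlling $W_q(x^n-1)$ uniformly: unlike $W(q^n-1)$, which \eqref{eq3} handles cleanly, the polynomial analogue needs a separate argument to show $W_q(x^n-1)$ is subexponential in $n$ (so that the exponential gain $q^{cn}$ on the left wins), and one must be careful that the estimate be uniform in $q$ — the cleanest version observes that the distinct irreducible factors of $x^n-1$ over $\F_q$ are in bijection with the $q$-cyclotomic cosets modulo $n' = n/p^{v_p(n)}$, whose number is at most $d(n')$ in the "generic" large-$q$ case but can be as large as (number of divisors of $n'$)$\cdot$(class-number-like terms); the safe uniform bound is simply $W_q(x^n-1) \le 2^{n'} \le 2^n$, and then one absorbs the $2^n$ by taking $t$ large in \eqref{eq3} at the cost of needing $q$ bounded below, exactly as above. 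Everything else is a routine splitting into the regime $q$ large (handled uniformly by one choice of $t$) and the finitely many remaining values of $q$ (handled by letting $n \to \infty$), together with the observation that $n \le 2$ is already known.
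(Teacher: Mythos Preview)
Your overall strategy is the paper's: invoke Corollary~\ref{mainresult}, bound $W(q^n-1)$ via \eqref{eq3}, bound $W_q(x^n-1)\le 2^n$, and split into a large-$q$ regime (where $q^{(1/2 - 2/t)n}$ beats $2^n$) and a small-$q$ regime. The paper takes $t$ large enough (concretely $t\ge 29$) that the main argument already covers all $q\ge 5$, leaving only $q\in\{2,3,4\}$; for these it simply quotes the explicit Lenstra--Schoof bounds $W_2(x^n-1)\le 2^{(n+5)/4}$, $W_3(x^n-1)\le 2^{(n+4)/3}$, $W_4(x^n-1)\le 2^{n/3+2}$, which give a clean linear saving in the exponent and finish the proof. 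The $n\le 2$ case also needs the one-line verification that the \cite{CSS} criterion fails for only finitely many $q$; the paper does this, and your ``already covered by the remark'' is slightly too quick, since that remark only reduces to \cite{CSS} and does not by itself assert finiteness.

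The genuine gap in your write-up is the small-$q$ case. Your assertion that the number of distinct irreducible factors of $x^n-1$ over $\F_q$ is ``$\le n^{o(1)}\cdot(\text{something})$'' is false: for fixed $q$ that count can be as large as $\Theta(n/\log n)$ (take $n=q^k-1$, so that the irreducible factors of $x^n-1$ are exactly all monic irreducibles of degree dividing $k$, of which there are $\sim q^k/k$ of degree $k$ alone). What is true, and what would suffice for your conclusion $W_q(x^n-1)=q^{o(n)}$, is that the count is $o(n)$; the argument you allude to---at most $O(q^D)$ factors of degree $\le D$, at most $n/D$ factors of degree $>D$, then optimise $D\approx\log_q n$---does yield $O(n/\log n)$, but you neither state it correctly nor carry it out. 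Either complete this estimate, or, more simply, follow the paper: enlarge $t$ so that $q=5$ falls into the main case, and cite Lenstra--Schoof for $q\in\{2,3,4\}$.
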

\begin{proof}
Clearly every $\alpha \in \F_q^*$ is  
	normal over
	$\F_q$ and it is well known that  if $\alpha \in \F_{q^2}$ is 
	primitive, then
	$\alpha$ is also normal over $\F_q$. Thus, for $n=1$ or $n=2$ we get that
	$(q,n)\in \Bb(m_1,m_2)$ if and only if
	for every $f \in \Upsilon_{q^n} (m_1,m_2)$
	there exists a primitive element $\alpha \in \F_{q^n}$ such that 
	$f(\alpha)$ is
	also primitive. This problem was solved in \cite{CSS} and from \cite[Thm. 
	3.1]{CSS} we know that a sufficient condition for the existence of 
	such an element is that
	$q^{n/2} \geq (m_1 + m_2) W(q^n-1)^2$. Using Lemma \ref{cota-t}
	and choosing a real number $t>4$
	one may check that if $q \geq \left( 
	(m_1+m_2)A_t\right)^{\frac{2t}{(t-4)n}}$
	then $(q,n)\in \Bb(m_1,m_2)$. In particular there exists only a finite 
	number of pairs
	$(q,n) \notin \Bb(m_1,m_2)$ when $n=1$ or $n=2$.
	
	We assume now that $n \geq 3$, 
	clearly $W_q(x^n-1) \leq 2^n$ and using the bound \eqref{eq3} above 
	we have $W(q^n-1) \leq A_t \cdot q^{\frac{n}{t}}$ for any real number $t 
	>0$, so  
	from Corollary \ref{mainresult}, we get that if 
	$q^{\frac{n}{2}} \geq (m_1+m_2+1)\cdot A_t^2 \cdot q^{\frac{2n}{t}} \cdot 
	2^n$
	then $(q,n)\in \Bb(m_1,m_2)$.
	In particular, if we choose a real number $t>4$, 
	a condition to have $(q,n)\in \Bb(m_1,m_2)$
	for some $n \geq 3$ is
\begin{equation}\label{condition_q}
		q \geq 
		\left(
		2^n \cdot (m_1+m_2+1)\cdot A_t^2
		\right)^{\frac{2t}{(t-4)n}}.
\end{equation}
	In particular this means that for a given natural number $n$ there exists a 
	finite number of prime
	powers $q$ such that $(q,n)\notin \Bb(m_1,m_2)$.
	
	Inequality $q^{\frac{n}{2}} \geq (m_1+m_2+1)\cdot A_t^2 \cdot 
	q^{\frac{2n}{t}} \cdot 2^n$
	is also equivalent to
	\begin{equation}\label{condition-n}
		n \geq
		\frac{\ln \left( (m_1+m_2+1)\cdot A_t^2 \right)}%
		{(\frac{t-4}{2t}) \cdot  \ln q - \ln 2} ,
	\end{equation}
for $t> \frac{4 \ln q}{\ln q - 2 \ln 2}$.
The function on the right hand side is a decreasing function of
$q > 2^{\frac{2t}{t-4}}$.
If we choose $t\geq 29$ then the right hand side of \eqref{condition-n}
	is a decreasing function of $q \geq 5$.
	So, if
	$N$ is a natural number such that \eqref{condition-n} is true for $q=5$,
	for some $t\geq 29$, then
	$(q,n)\in \Bb(m_1,m_2)$ for all prime powers $q\geq 5$ and all
	natural numbers
	$n \geq N$.
	
	From \cite[Lemma 2.11]{Lenstra} we have for $n \geq 16$
	$$
	W_q(x^n-1) \leq 
	\left\{
	\begin{array}{ll}
	2^{\frac{n+5}{4}} & \text{if } q=2;\\
	2^{\frac{n+4}{3}} & \text{if } q=3;\\
	2^{\frac{n}{3}+2} & \text{if } q=4,
	\end{array}
	\right.
	$$
	and for these values of $q$ we may change inequality \eqref{condition-n} for
	\begin{equation}\label{condition-n-q4}
		n \geq
		\left\{
		\begin{array}{ll}
			\frac{4t}{t-8}
			\left(
			\frac{\ln ((m_1+m_2+1)\cdot A_t^2)}{\ln 2} + \frac{5}{4}
			\right)
				& \text{if } q=2, \text{ for some } t > 8; \\
			\frac{\ln ((m_1+m_2+1)\cdot A_t^2) + \frac{4}{3} \ln 2}%
			{\left( \frac{t-4}{2t} \right) \ln 3 - \frac{1}{3} \ln 2} 
			& \text{if } q=3, \text{ for some } t \geq 7;\\
			\frac{3t}{t-6}
			\left(
			\frac{\ln ( (m_1+m_2+1)\cdot A_t^2)}{\ln 4} +1
			\right)
			& \text{if } q=4, \text{ for some } t > 6.
		\end{array}
		\right.
	\end{equation}
	Putting \eqref{condition-n} and \eqref{condition-n-q4} together we get that 
	there exists a positive integer $M$ such that if $n \geq M$ then
	$(q,n)\in \Bb(m_1,m_2)$ for every prime power $q$. For the 
	natural numbers
	$n<M$ we get, from what was done above, that there exists a finite number 
	of prime 
	powers
	such that $(q,n)\notin \Bb(m_1,m_2)$.
\end{proof}

\section{Numerical examples}



In this section we will determine $\Bb(3,2)$, except possibly for a finite 
number of pairs $(q,n)$, which we list.

\begin{proposition}\label{casen}
Let $q$ be a prime power and $n\geq 3$ be a natural number. 
We have $(q,n)  \in \Bb(3,2)$ for 
$q\geq 3.74 \cdot 10^9$ and $n=3$,
for $q\geq 3.91 \cdot 10^7$ and $n=4$,
for $q\geq2.5 \cdot 10^6$ and $n=5$, and
for $q\geq 23$ and $n\geq 6$.
\end{proposition}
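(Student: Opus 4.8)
The idea is to specialize the general sufficient conditions already established — Corollary \ref{mainresult} for large $q$ and Lemma \ref{divisores} for the remaining ranges — to the case $m_1 = 3$, $m_2 = 2$, so that $m_1 + m_2 + 1 = 6$, and then carry out the arithmetic for each small value of $n$. For a fixed $n$, Corollary \ref{mainresult} says that $(q,n) \in \Bb(3,2)$ whenever $q^{n/2} \geq 6\, W(q^n-1)^2 W_q(x^n-1)$. First I would bound $W_q(x^n-1)$: since the irreducible factors of $x^n-1$ over $\F_q$ are in bijection with the $q$-cyclotomic cosets mod $n$, one has $W_q(x^n-1) = 2^{r}$ where $r$ is the number of such cosets, and for $n = 3,4,5$ this gives the explicit small constants $W_q(x^3-1) \leq 2^3$, $W_q(x^4-1)\leq 2^4$, $W_q(x^5-1) \leq 2^5$ (with equality only when $q \equiv 1$ modulo the relevant modulus). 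Then I would apply Lemma \ref{cota-t} (or the cruder \eqref{eq3}) to get $W(q^n-1) \leq A_{t,q^n-1}\,(q^n-1)^{1/t} \leq A_t\, q^{n/t}$, and solve the resulting inequality $q^{n/2} \geq 6 A_t^2 q^{2n/t} W_q(x^n-1)$ for $q$, i.e. $q^{n(1/2 - 2/t)} \geq 6 A_t^2 W_q(x^n-1)$, choosing $t$ large enough (say $t$ around $20$–$40$) to balance the exponent loss against the size of $A_t$; optimizing $t$ numerically for each $n$ yields the stated thresholds $3.74\cdot 10^9$, $3.91\cdot 10^7$, $2.5\cdot 10^6$.

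For $n \geq 6$ the naive bound from Corollary \ref{mainresult} is far too weak to reach $q \geq 23$, so there I would invoke the sieve inequality of Lemma \ref{divisores}. The point is to choose $\ell$ a divisor of $q^n - 1$ and $g$ a divisor of $x^n-1$ — typically $g = x-1$ or $g$ the product of the few small-degree irreducible factors, and $\ell$ the product of the small primes dividing $q^n-1$ — so that $\delta = 1 - 2\sum 1/p_i - \sum q^{-\deg P_i} > 0$ and $\Delta = (2r+s-1)/\delta + 2$ is controlled, while simultaneously $W(\ell)$ and $W_q(g)$ are much smaller than $W(q^n-1)$ and $W_q(x^n-1)$. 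Then Lemma \ref{divisores} gives $(q,n) \in \Bb(3,2)$ as soon as $q^{n/2} \geq 6\, W(\ell)^2 W_q(g)\,\Delta$. One then again bounds $W(\ell) \leq A_t\, q^{n/t}$ and $W_q(g) \leq 2^n$ (or better, by the degree of $g$), and for $n \geq 6$ the exponent $n/2$ is large enough that, after selecting $t$ appropriately, the inequality holds for all $q \geq 23$ — modulo finitely many pairs that must be checked by hand or by a short computer search. For the genuinely small cases ($n = 6, \ldots$ up to wherever the asymptotic bound of Proposition \ref{Asymptotic}, specialized here, takes over, and $q$ down to $23$) I would rely on a direct computation: for each such $(q,n)$ one picks the optimal $\ell, g$ and verifies the sieve inequality numerically, recording any exceptions.

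The main obstacle is the bookkeeping in the sieve step for $n \geq 6$: the quantity $\Delta$ depends on the prime factorization of $q^n - 1$, which varies wildly with $q$, so one cannot use a single uniform choice of $\ell$. The standard remedy — which I would follow — is a two-tier argument: first use Lemma \ref{cota-t} with a generous $t$ to handle all $q$ above some moderate bound $B(n)$ (using $\ell = q^n-1$, $g = x^n-1$, i.e. reducing to Corollary \ref{mainresult} or using a fixed crude sieve), obtaining $B(6) \leq 23$ after a careful choice of $t$ and using $W_q(x^6-1) \leq 2^6$; then for $n \geq 7$ one shows $B(n)$ decreases in $n$ (the exponent $n/2$ grows linearly while $W_q(x^n-1) \leq 2^n$ grows only like $2^n$ and $W(q^n-1)^2 \leq A_t^2 q^{2n/t}$ with $t$ also allowed to grow), so a single threshold $q \geq 23$ works uniformly for all $n \geq 6$. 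The delicate points are (i) verifying that $W_q(x^n - 1) \leq 2^{n}$ is good enough for $n \geq 6$, or whether one needs the sharper Lenstra-type bounds of Proposition \ref{Asymptotic} for the very smallest $q$ (namely $q \in \{2,3,4\}$, which here are below $23$ and hence irrelevant to the claimed range — a simplification worth noting), and (ii) pinning down exactly which finite list of pairs $(q,n)$ with $q < 23$, $n \geq 6$ or $q$ below the stated thresholds for $n = 3,4,5$ remain undecided; these are then collected as the explicit exceptional list promised in the surrounding text.
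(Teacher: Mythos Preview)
Your overall architecture --- Corollary \ref{mainresult} together with the bound $W(q^n-1)\leq A_t q^{n/t}$ from Lemma \ref{cota-t} to get a threshold in $q$ for each $n$, then the sieve of Lemma \ref{divisores} to push the threshold down --- is exactly the paper's approach. But two concrete points would prevent the argument from going through as you describe it.

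First, the parameter $t$. You suggest ``$t$ around $20$--$40$''. This is off by an order of magnitude: $A_t$ is a product over \emph{all} primes below $2^t$, so at $t=20$ one has roughly $78\,000$ factors and $A_t$ is astronomically large. The trade-off is that the exponent $\tfrac{2t}{(t-4)n}$ decreases in $t$ while $A_t$ increases sharply; the optimum lies near $t\approx 6.3$--$6.5$ for $n=3,4,5$ and near $t\approx 9$ for large $n$. With these values one gets exactly the thresholds $3.74\cdot 10^9$, $3.91\cdot 10^7$, $2.5\cdot 10^6$.

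Second, and more seriously, your claim that a ``careful choice of $t$'' in Corollary \ref{mainresult} (with $\ell=q^n-1$, $g=x^n-1$) already gives $B(6)\leq 23$ is false. Since $A_t\geq 1$ always, the inequality $q^{n(1/2-2/t)}\geq 6\cdot 2^n A_t^2$ at $n=6$, $q=23$ would force $A_t^2<0.1$, which is impossible. What the Corollary alone actually gives is $B(6)\approx 3.9\times 10^5$, $B(10)\approx 9200$, and one does not reach $B(n)\leq 23$ until roughly $n\geq 158$. The entire intermediate range $6\leq n\leq 157$, $23\leq q<B(n)$ must be handled by (i) computing $W_q(x^n-1)$ \emph{exactly} rather than bounding it by $2^n$, which already disposes of most pairs, and then (ii) applying Lemma \ref{divisores} to several thousand remaining pairs with a \emph{small fixed} $\ell$ such as $\gcd(q^n-1,\,2\cdot 3\cdot 5\cdot 7)$ --- so that $W(\ell)\leq 16$ is known exactly, not bounded via $A_t$ --- and $g=1$, followed by a second pass with $g$ the product of the linear factors of $x^n-1$ for the handful of stubborn pairs. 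This is a genuine machine computation, not a uniform estimate.
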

\begin{proof}
Recall, from 
inequality \eqref{condition_q}, that 
if $q \geq \left(
2 \cdot \sqrt[n]{6 \cdot A_t^2}
\right)^{\frac{2t}{t-4}}$
holds for some real number $t > 4$ then 
 $(q,n) \in \mathcal{B}(3,2)$. Using this inequality we 
constructed the table below, where each row presents
values of $t_0$, $n$ and $M(n,t_0)$ such that 
taking $t = t_0$ we get $(q,n) \in \mathcal{B}(3,2)$ whenever
$q \geq M(n,t_0)$.

\vspace{2ex}
\begin{center}
\begin{tabular}{|l|l|l|}
\hline 
$t_0$ & $n$ & $M(n,t_0)$ \\ 
\hline 
6.3 & 3 & $3.74 \cdot 10^9$ \\ 
\hline 
6.3 & 4 & $3.91 \cdot 10^7$ \\ 
\hline 
6.4 & 5 &  $2.5 \cdot 10^6$  \\ 
\hline 
6.5 & 6, 7, 8, 9 & 394155 \\ 
\hline 
6.7 & $10, 11, \cdots, 157$ & 9239 \\ 
\hline 
9 & $n \geq 158$ & 23 \\ 
\hline 
\end{tabular}
\end{center} 

\vspace{2ex}
Now we use SageMath to refine the lower bound for $q$ when $n \geq 6$.
We start by testing inequality 
$q^{\frac{n}{2}} \geq  6 W(\ell)^2W_q(g) \Delta$, which appears in Lemma 
\ref{divisores}, taking prime powers $q$ in the range 
$9239\leq q < 394155$, $\ell = \gcd(q^n-1,2 \cdot 3 \cdot 5 \cdot 7)$, $g = 1$ 
and $n \in \{6,7,8,9\}$ and we get that $(q,n)\in \Bb(3,2)$ for all these 
values of 
$q$ and $n$. Next, we combine the condition in Corollary \ref{mainresult} with 
the bound in Lemma \ref{cota-t} to obtain the inequality  
$q^{\frac{n}{2}} \geq 6 A_{t}^2 q^{\frac{2n}{t}} W_q(x^n-1)$
which holds true for prime powers
$23\leq q < 9239$ and $65 \leq n \leq 157$ when we take $t = 7$, so we get 
$(q,n)\in \Bb(3,2)$ for these values of 
$q$ and $n$. Taking $t = 8$ the inequality holds true for prime powers $23\leq 
q < 9239$ and $n$ in the range $6 \leq n < 65$, except for $7713$ pairs $(q,n)$.
To test if these pairs belong to $\Bb(3,2)$  we use again Lemma 
\ref{divisores}, taking $\ell=\gcd(q^n-1,2\cdot 3 \cdot 5 \cdot 7)$ and $g=1$, 
and we get an affirmative answer except for the pairs 
\[
\begin{array}{l}
(32, 31),(27, 26),(27, 52),(25, 24),(25, 48),(49, 48),(23, 22),\\
(23, 44),(31, 30),(37, 36),(41, 40),(43, 42),(47, 46),(53, 52).
\end{array}
\]
To check if these pairs belong to  $\Bb(3,2)$ we use one more time the 
condition in  
Lemma \ref{divisores}, again with $\ell=\gcd(q^n-1,2\cdot 3 \cdot 5 \cdot 7)$ 
but this time we take $g$ to be the product of all linear factors of $x^n - 1$, 
and we get that all these pairs belong to $\Bb(3,2)$. 
\end{proof}

The following result will be useful in the study of the case $n = 3$;

\begin{lemma}\label{div3}
Let $q$ be a prime power.
If $\wp \neq 3$ is a prime number
such that $\wp \mid q^2+q+1$, then
$\wp \nmid q-1$ and
$\wp \equiv 1 \pmod 3$.
\end{lemma}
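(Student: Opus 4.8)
The plan is to work entirely with elementary number-theoretic identities concerning the polynomial $q^2+q+1$. First I would record the basic factorization $q^3 - 1 = (q-1)(q^2+q+1)$, which shows that any prime $\wp$ dividing $q^2+q+1$ divides $q^3-1$, hence $q^3 \equiv 1 \pmod{\wp}$; in particular $\gcd(q,\wp)=1$, so $q$ has a well-defined multiplicative order $d := \ord_\wp(q)$ in $(\Z/\wp\Z)^*$, and $d \mid 3$, so $d \in \{1,3\}$.

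Next I would rule out $d=1$ to get the first claim. Suppose $\wp \mid q-1$, i.e.\ $q \equiv 1 \pmod \wp$. Then $q^2 + q + 1 \equiv 1 + 1 + 1 = 3 \pmod \wp$, so $\wp \mid 3$, forcing $\wp = 3$, contrary to the hypothesis $\wp \neq 3$. Hence $\wp \nmid q-1$, and consequently $d \neq 1$, so $d = 3$. Since $d = \ord_\wp(q)$ divides $|(\Z/\wp\Z)^*| = \wp - 1$ by Lagrange's theorem (equivalently, Fermat's little theorem), we conclude $3 \mid \wp - 1$, that is, $\wp \equiv 1 \pmod 3$, which is the second claim.

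There is no real obstacle here: the only point that needs a word of care is the passage $d=1 \Rightarrow \wp \mid 3$, where one must invoke $\wp \neq 3$ to finish, and the implicit use of $\gcd(q,\wp) = 1$ (which is immediate since $\wp \mid q^2+q+1$ and $q^2+q+1 \equiv 1 \pmod{q}$, so $\wp$ cannot divide $q$). Everything else is Fermat's little theorem applied to the order of $q$ modulo $\wp$.
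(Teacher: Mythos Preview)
Your proof is correct and follows essentially the same approach as the paper. The paper phrases the first step as the gcd computation $\gcd(q-1,q^2+q+1)=\gcd(q-1,3)$, while you substitute $q\equiv 1$ directly into $q^2+q+1$ to get $3\pmod\wp$; these are the same calculation, and the second step (order of $q$ mod $\wp$ equals $3$, hence $3\mid\wp-1$) is identical in both.
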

\begin{proof}
From $\gcd(q-1,q^2+q+1)=\gcd(q-1,3) \in \{1,3\}$ and the fact that 
 $\wp \neq 3$ is a prime divisor of $q^2+q+1$ we get 
$\wp \nmid q-1$. 
From 
$\wp \mid q^3-1$ and $q \not\equiv 1 \pmod \wp$ we get that there is an element 
of order 3 in 
$\mathbb{F}_{\wp}^*$ so $3 \mid \wp - 1$.
\end{proof}

\begin{proposition}\label{casen3}
Let $q$ be a prime power. Then
$(q,3)\in \Bb(3,2)$, except possibly for
\begin{eqnarray}
q \in \{ 2,4,8,16,3,9,27,81,5,25,7,49,11,121,13,17, \nonumber\\
19,23,29,31,37,43,61,67,71,79,151,211,331\}. \nonumber
\end{eqnarray}
\end{proposition}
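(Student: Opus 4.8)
The plan is to combine the asymptotic bound from Proposition \ref{casen} with the sieve inequality of Lemma \ref{divisores}, followed by a direct computational check for the remaining small prime powers. For $n = 3$, Proposition \ref{casen} already gives $(q,3) \in \Bb(3,2)$ for all $q \geq 3.74 \cdot 10^9$, so only finitely many prime powers $q < 3.74 \cdot 10^9$ need to be examined. The key to narrowing this range efficiently is that $x^3 - 1 = (x-1)(x^2+x+1)$ over $\F_q$, so $W_q(x^3-1) \leq 4$, and the relevant arithmetic quantity is $W(q^3 - 1) = W((q-1)(q^2+q+1))$. Lemma \ref{div3} is precisely the tool that controls the prime factorization of $q^3 - 1$: the primes dividing $q^2+q+1$ (other than $3$) are all $\equiv 1 \pmod 3$ and are coprime to $q - 1$, which both sharpens the estimate for $W(q^3-1)$ and helps in choosing a good modulus $\ell$ in the sieve.

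The main steps, in order, are as follows. First, I would apply Corollary \ref{mainresult} directly: using the bound $W_q(x^3-1) \leq 4$ together with Lemma \ref{cota-t} to bound $W(q^3-1) \leq A_t \, (q^3-1)^{1/t} < A_t \, q^{3/t}$ for a suitable real $t > 4$, the condition $q^{3/2} \geq 7 \cdot W(q^3-1)^2 \cdot 4$ becomes an explicit inequality $q^{3/2} \geq 28 A_t^2 q^{6/t}$, i.e.\ $q^{3/2 - 6/t} \geq 28 A_t^2$, which holds for all $q$ above some moderate threshold (optimizing over $t$). This replaces the bound $3.74 \cdot 10^9$ by something far smaller — likely in the low thousands or tens of thousands. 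Second, for the prime powers $q$ below that refined threshold, I would invoke Lemma \ref{divisores} with $\ell = \gcd(q^3 - 1, \, 2 \cdot 3 \cdot 5 \cdot 7 \cdots)$ chosen to absorb the small primes, $g$ either $1$ or the product of linear factors of $x^3 - 1$, and compute $\delta$ and $\Delta$; here Lemma \ref{div3} guarantees that the primes $p_i$ left out of $\ell$ that divide $q^2+q+1$ are all large ($\equiv 1 \pmod 3$, so $\geq 7$ after excluding small ones), making $\delta > 0$ hold for a wide range of $q$. Finally, for the prime powers $q$ still not covered — which SageMath computation would enumerate — one checks membership in $\Bb(3,2)$ case by case using Lemma \ref{divisores} with progressively larger $\ell$ and $g$, exactly as in the proof of Proposition \ref{casen}, and those that genuinely resist every such test form the exceptional list.

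The hard part will be the bookkeeping in the sieve step: making sure that for each small $q$ the chosen $\ell$ and $g$ actually yield $\delta > 0$ and that the inequality $q^{n/2} \geq 7 \, W(\ell)^2 W_q(g) \Delta$ of Lemma \ref{divisores} is satisfied, since $W(\ell)$ and $\Delta$ depend delicately on which primes are thrown into $\ell$. One expects a handful of prime powers — those $q$ for which $q^3 - 1$ has an unusually rich set of small prime factors, forcing $\delta \leq 0$ for the natural choices of $\ell$ — to survive all attempts; these are exactly the $29$ values listed. The list includes all prime powers up to $49$ together with a scattering of larger primes ($61, 67, 71, 79, 151, 211, 331$), and one would verify that for each of these the sieve genuinely fails rather than simply not having tried hard enough, which is the delicate point that must be addressed honestly rather than left to ``computation shows''.
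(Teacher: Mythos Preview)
Your outline has the right architecture (asymptotic bound, then sieve, then computation), and your instinct that Lemma~\ref{div3} is the key number-theoretic input is correct. But there is a genuine gap in the plan, and it lies exactly where you try to bridge from $q < 3.74\cdot 10^9$ down to a computationally feasible range.

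First a minor slip: for $\Bb(3,2)$ one has $m_1+m_2+1 = 6$, not $7$, so the constant in Corollary~\ref{mainresult} and Lemma~\ref{divisores} is $6$.

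More seriously, your first step overestimates what Corollary~\ref{mainresult} can do. Replacing the crude $W_q(x^3-1)\le 2^3$ by $W_q(x^3-1)\le 4$ only halves the right-hand side of the inequality in Proposition~\ref{casen}, and since the bound on $q$ there is $(48A_t^2)^{2t/(3t-12)}$, halving $48$ to $24$ lowers the threshold by a factor of roughly $2^{1.8}\approx 3.5$, i.e.\ to about $10^9$ --- nowhere near ``low thousands or tens of thousands''. You would still be facing tens of millions of prime powers, and your proposed sieve with $\ell=\gcd(q^3-1,2\cdot3\cdot5\cdot7\cdots)$ does not give a \emph{uniform} analytic bound: with that choice of $\ell$ the primes $p_i$ left over include arbitrary prime divisors of $q-1$, which Lemma~\ref{div3} says nothing about, so you cannot bound $r$ or $\delta$ independently of $q$.

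The paper closes this gap by applying Lemma~\ref{divisores} \emph{analytically} with the choice $\ell = q-1$ and $g=1$. The point is that then every prime $p_i\mid q^3-1$ with $p_i\nmid \ell$ must divide $q^2+q+1$, and by Lemma~\ref{div3} each such $p_i$ (other than $3$) is $\equiv 1\pmod 3$. Since their product is at most $q^2+q+1 < 1.4\cdot 10^{19}$, there are at most $r\le 11$ of them, and $\sum 1/p_i$ is bounded by the sum $S_{11}$ of reciprocals of the first eleven primes congruent to $1\pmod 3$. This yields $\delta>0.153$ and $\Delta<159$ uniformly for $q>10^4$; bounding $W(\ell)=W(q-1)\le A_t\,q^{1/t}$ via Lemma~\ref{cota-t} then turns the sieve inequality into $q^{3/2}\ge 6\cdot A_t^2\cdot q^{2/t}\cdot 159$, which for $t=3.7$ gives the manageable threshold $q\ge 22282$. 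Only then does one pass to a direct SageMath check of the $\sim 2500$ remaining prime powers, using $\ell=\gcd(q^3-1,2\cdot3\cdot5)$ as you suggest. The choice $\ell=q-1$ is the missing idea in your plan; without it the reduction from $10^9$ to $\sim 2\cdot 10^4$ does not go through.
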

\begin{proof}
From Lemma \ref{casen} we have
$(q,3)  \in \Bb(3,2)$ for 
$q\geq 3.74 \cdot 10^9$, so we assume 
$q< 3.74 \cdot 10^9$. Let
$$
q^2+q+1 = 3^{a_0} \cdot \prod_{i=1}^r \wp_i^{a_i}
$$
be the factorization in distinct prime numbers of $q^2+q+1$. From
Lemma \ref{div3} we have $\wp_i \nmid q-1$ and
$\wp_i \equiv 1 \pmod 3$ for $i \in \{1,\ldots , r \}$.
For any natural number $k$ let  $S_k$ and $P_k$ be, respectively, the sum of 
the inverses
and the product of the first $k$ primes of the form $3j+1$.
Then 
$P_r \leq q^2+q+1 < 1.3988\cdot 10^{19}$
and since $P_{11} < 3 \cdot 10^{17}$ and $3.26 \cdot 10^{19} < P_{12}$
we must have $r \leq 11$.
To check if $(q,3) \in \Bb(3,2)$ we use the condition 
$q^{\frac{n}{2}} \geq 6 W(\ell)^2 W_q(g) \Delta$
which appears in  Lemma \ref{divisores},
taking $\ell=q-1$ and $g=1$. Let's suppose that $q > 10^4$, then 
$$
\delta \geq 1 - 2 S_r - \frac{3}{q} >1 - 2 S_{11} - \frac{3}{10^4} 
> 0.153
$$
and $\Delta=2+\frac{2r+s-1}{\delta} <  2+\frac{2 \cdot 11+3-1}{0.153}< 159$.
Using the bound \eqref{eq3} we get that the above condition is satisfied if
$$
q^{\frac{3}{2}} \geq 6 \cdot A_t^2 \cdot q^{\frac{2}{t}} \cdot  W_q(1) \cdot 
159,
$$
or equivalently, if  $q \geq \left(954 \cdot A_t^2 
\right)^{\frac{2t}{3t-4}}$ for some real number $t > 0$.
Taking $t=3.7$ we get $(q,3) \in \Bb(3,2)$
for $q \geq 22282$.
To verify if $(q,3) \in \Bb(3,2)$ for the  
 $2563$ prime powers less than $22282$ we used SageMath to test  
the condition in 
Lemma \ref{divisores}, taking
$\ell=\gcd(q^3-1,2\cdot 3 \cdot 5)$ and $g=1$, and we got that
$(q,3)\in \mathcal{B}(3,2)$ for all prime power $q$ above except possibly for 
\begin{eqnarray}
q \in \{ 2,4,8,16,3,9,27,81,5,25,7,49,11,121,13,17,19, \nonumber \\
23,29,31,37,41,43,61,67,71,79,151,181,211,331\}. \nonumber
\end{eqnarray}

We also got $(41, 3) \in \Bb(3,2)$ (respectively, $(181, 3) \in \Bb(3,2)$) by 
using SageMath to test the condition in Lemma \ref{divisores} with the data 
$\ell=2$, $g=1$ (respectively, $\ell=2 \cdot 3$ and $g=1$).
%
\end{proof}

\begin{proposition}\label{casen4}
Let $q$ be a prime power. Then
$(q,4)\in \Bb(3,2)$, except possibly for
\begin{eqnarray}
q \in \{ 2,4,8,16,3,9,27,5,25,7,11,13, \nonumber\\
17,19,23,29,31,37,41,43,47,83\}. \nonumber
\end{eqnarray}
\end{proposition}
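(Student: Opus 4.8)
The plan is to follow the strategy of Proposition \ref{casen3}. By Proposition \ref{casen} we already have $(q,4)\in\Bb(3,2)$ for every prime power $q\geq 3.91\cdot 10^{7}$, so it suffices to treat the prime powers $q<3.91\cdot 10^{7}$. The arithmetic input is the factorization $q^{4}-1=(q^{2}-1)(q^{2}+1)$ together with the following analogue of Lemma \ref{div3}: if $\wp$ is an odd prime dividing $q^{2}+1$, then $\wp\nmid q^{2}-1$ (since $\gcd(q^{2}-1,q^{2}+1)\mid 2$) and $\wp\equiv 1\pmod 4$ (because $q$ then has order exactly $4$ in $\mathbb{F}_{\wp}^{*}$), while $2\mid q^{2}+1$ forces $q$ odd and hence $8\mid q^{2}-1$. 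Consequently, when Lemma \ref{divisores} is applied with $\ell=q^{2}-1$ and $g=1$, every prime $p_{i}$ that divides $q^{4}-1$ but not $\ell$ divides $q^{2}+1$ and satisfies $p_{i}\equiv 1\pmod 4$, so $p_{i}\geq 5$. Since the product of the first eleven primes congruent to $1\pmod 4$ exceeds $(3.91\cdot 10^{7})^{2}+1$, at most ten such primes can divide $q^{2}+1$, so $r\leq 10$.

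I would then make the bound of Lemma \ref{divisores} explicit. With $g=1$ the polynomial $x^{4}-1$ has at most four distinct monic irreducible factors over $\mathbb{F}_{q}$, so $s\leq 4$ and $\sum_{j=1}^{s}q^{-\deg P_{j}}\leq s/q\leq 4/q$; hence $\delta\geq 1-2S_{10}-4/q$, where $S_{10}$ denotes the sum of the reciprocals of the first ten primes $\equiv 1\pmod 4$. One checks numerically that $1-2S_{10}>0$, so restricting to $q>10^{4}$ yields an explicit positive lower bound for $\delta$, and hence an explicit bound $\Delta<\Delta_{0}$ for $\Delta=2+(2r+s-1)/\delta$. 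Substituting $W_{q}(1)=1$, $m_{1}+m_{2}+1=6$, and the estimate $W(q^{2}-1)\leq A_{t}(q^{2}-1)^{1/t}<A_{t}q^{2/t}$ coming from \eqref{eq3} into the hypothesis $q^{n/2}\geq(m_{1}+m_{2}+1)W(\ell)^{2}W_{q}(g)\Delta$ of Lemma \ref{divisores}, we see that $q^{2}\geq 6\,\Delta_{0}\,A_{t}^{2}\,q^{4/t}$ is implied by $q\geq(6\,\Delta_{0}\,A_{t}^{2})^{t/(2t-4)}$; choosing a convenient value of $t$ (around $t=5$) then produces an explicit threshold $Q_{0}$ above which $(q,4)\in\Bb(3,2)$.

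It remains to settle the finitely many prime powers $q<Q_{0}$ computationally, as in Proposition \ref{casen3}: first test the condition of Lemma \ref{divisores} in SageMath with $\ell=\gcd(q^{4}-1,\,2\cdot 3\cdot 5\cdot 7)$ (enlarging this squarefree bound if convenient) and $g=1$, which should dispose of all but a short list of $q$; for the pairs that remain, rerun the test with the same $\ell$ but with $g$ equal to the product of the distinct linear factors of $x^{4}-1$ over $\mathbb{F}_{q}$, which raises $\Phi(g)/N(g)$ at the cost of a controlled increase in $W_{q}(g)$; and, if a few $q$ still resist, with choices of $\ell$ tailored to those individual $q$. The outcome of these computations should be exactly the stated exceptional set $\{2,4,8,16,3,9,27,5,25,7,11,13,17,19,23,29,31,37,41,43,47,83\}$.

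The step I expect to be the main obstacle is controlling $\delta$: because the least prime $\equiv 1\pmod 4$ is $5$ rather than $7$ (as in the $n=3$ case), $S_{10}$ is larger, $\delta$ is correspondingly smaller, so $\Delta_{0}$ and $Q_{0}$ are larger, and consequently both the number of prime powers left to the computer and the final sieving step, which must eliminate the near-exceptional values such as $q=83$ (for which $q^{4}-1$ is a product of several small primes), become more delicate. If $\delta$ comes out uncomfortably small with $\ell=q^{2}-1$ alone, a workable fallback is to also absorb the prime $5$ into $\ell$ whenever $5\mid q^{2}+1$, which raises the lower bound on the remaining $p_{i}$ to $13$ at the cost of a single extra factor of $2$ in $W(\ell)$.
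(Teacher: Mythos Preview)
Your plan is sound and would work, but it takes a genuinely different route from the paper at the key step. You mirror Proposition~\ref{casen3} by choosing $\ell=q^{2}-1$ in Lemma~\ref{divisores} and exploiting that the leftover primes $p_i\mid q^{2}+1$ are all $\equiv 1\pmod 4$; the paper instead abandons this ``algebraic'' choice for $n=4$ and takes $\ell=\gcd(q^{4}-1,\,2\cdot 3\cdot 5\cdot 7\cdot 11\cdot 13)$ with $g=1$. With that choice $W(\ell)\le 2^{6}$ is a fixed constant (no appeal to \eqref{eq3} is needed for $W(\ell)$), the leftover primes are simply those exceeding $13$, one gets $r\le 18$, $\delta>0.099$, $\Delta<396$, and the sufficient condition of Lemma~\ref{divisores} becomes $q^{2}\ge 6\cdot(2^{6})^{2}\cdot 396$, i.e.\ $q\ge 3120$. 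This is precisely the difficulty you anticipate in your last paragraph: with your choice $2S_{10}\approx 0.96$, so $\delta$ is only about $0.036$, $\Delta_{0}$ is well over $600$, and after inserting $W(q^{2}-1)\le A_t q^{2/t}$ the threshold $Q_0$ lands in the tens of thousands rather than near $3120$. Your approach still terminates (there are just several thousand prime powers to feed to SageMath instead of $480$), and your proposed fallback of absorbing $5$ into $\ell$ is exactly the right instinct---the paper's choice is the limiting case of absorbing all primes up to $13$. For the residual computer check the paper also differs slightly: it first tries $\ell=\gcd(q^{4}-1,2\cdot 3\cdot 5)$, $g=1$, and then eliminates the remaining handful by \emph{shrinking} $\ell$ to $\gcd(q^{4}-1,2\cdot 3)$ rather than by enlarging $g$ as you propose; either tactic is legitimate.
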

\begin{proof}
From Lemma \ref{casen} we have
	$(q,4)  \in \Bb(3,2)$ for 
	$q\geq 3.91 \cdot 10^7$
so we assume
$q< 3.91 \cdot 10^7$.
Let
$$
q^4-1 = 2^{a_0} \cdot 3^{a_1} \cdot 5^{a_2} \cdot 7^{a_3} \cdot 11^{a_4} \cdot 13^{a_5}
\cdot
 \prod_{i=1}^r \wp_i^{b_i}
$$
be the factorization in distinct prime numbers of $q^4-1$,
where $\wp_i > 13$ for $i \in \{1,\ldots , r \}$.
Let define $S_k$ and $P_k$ be, respectively, the sum of the inverses
and the product of the first $k$ odd primes numbers bigger than $13$.
We have 
$P_r \leq q^4-1 < 2.34\cdot 10^{30}$ and from $P_{18} < 7.92 \cdot 10^{29}$ and 
$7.67 \cdot 10^{31} < P_{19}$ we get that $r \leq 18$.
Let's suppose that $q > 10^3$. We want to apply Lemma \ref{divisores} with 
$
\ell=\gcd (q^4-1, 2\cdot 3 \cdot 5 \cdot 7 \cdot 11 \cdot 13)
$
and $g = 1$. We have 
$$
\delta \geq 1 - 2 S_r - \frac{4}{q} > 1 - 2 S_{18} - \frac{4}{10^3} 
> 0.099
$$
so 
$\Delta=2+\frac{2r+s-1}{\delta} <  2+\frac{2\cdot 18+4-1}{0.099} < 396.$
Since 
$W(\ell)\leq 2^6$ and $W_q(1)=1$ we get that if $q \geq 3120 \geq (6 \cdot 
(2^6)^2 \cdot  
396)^{\frac{1}{2}}$ then 
$q^{\frac{4}{2}} \geq 6 W(\ell)^2 W_q(g) \Delta$ and from Lemma \ref{divisores} 
we get that $(q,4) \in \Bb(3,2)$.
There are $480$ prime powers lesser than $3120$, and using SageMath and 
Lemma \ref{divisores}, with 
$\ell=\gcd(q^4-1,2\cdot 3 \cdot 5)$ and $g=1$, to test them we get
$(q,4)\in \mathcal{B}(3,2)$ for all such prime powers except for 
\begin{eqnarray}
q \in \{ 2,4,8,16,32,3,9,27,5,25,7,49,11,13,17, \nonumber\\
19,23,29,31,37,41,43,47,53,61,67,73,83\}. \nonumber
\end{eqnarray}	
For these values, we use again SageMath and
Lemma \ref{divisores}, taking
$\ell=\gcd(q^4-1,2\cdot 3)$ and $g=1$, and we get
$(q,4)\in \Bb(3,2)$ for
$q \in \{32, 49, 53, 61, 67, 73\}$.  
\end{proof}

\begin{proposition}\label{casen5}
Let $q$ be a prime power. Then
$(q,5)\in \Bb(3,2)$, except possibly for
$q \in \{ 2, 3, 4, 5, 7, 9, 11, 16\}$.
\end{proposition}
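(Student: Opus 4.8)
The plan is to follow the same two-stage strategy that was used for $n=3$ and $n=4$: first invoke Proposition~\ref{casen} to reduce to a finite range of prime powers $q$, then sharpen the bound with the sieve from Lemma~\ref{divisores} (plus the explicit estimate of Lemma~\ref{cota-t}) so that only a short explicit list of candidate prime powers survives, and finally dispatch those candidates by direct machine computation of $N_f(e_1,e_2,g)$ for suitable auxiliary parameters. Concretely, Proposition~\ref{casen} gives $(q,5)\in\Bb(3,2)$ for $q\geq 2.5\cdot 10^6$, so we may assume $q<2.5\cdot 10^6$ throughout.

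First I would factor $q^5-1$. Since $5\mid q^5-1$ always, and the primes dividing $\gcd(q^5-1, q-1)=q-1$ are already small in many cases, the key point is to bound the number of "large" primes dividing $q^5-1$: writing $q^5-1=(q-1)(q^4+q^3+q^2+q+1)$ and using $q^5-1<(2.5\cdot10^6)^5$, a product-of-primes argument (as in the proofs of Propositions~\ref{casen3} and \ref{casen4}) caps the number $r$ of primes exceeding a chosen threshold. This makes $\delta = 1 - 2\sum 1/p_i - \sum 1/q^{\deg P_i}$ bounded below by an explicit positive constant once $q$ exceeds a modest bound (say $q>10^3$), hence $\Delta=2+(2r+s-1)/\delta$ is bounded above explicitly. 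Then I would apply Lemma~\ref{divisores} with $\ell=\gcd(q^5-1,2\cdot3\cdot5\cdots)$ for a suitable product of the smallest primes and $g=1$, combine it with the bound $W(\ell)\leq 2^t$ and the inequality $W(q^5-1)\leq A_t q^{5/t}$ from \eqref{eq3}, and solve the resulting inequality $q^{5/2}\geq 6 A_t^2 q^{10/t} W_q(g)\Delta$ for $q$, choosing $t$ to minimize the cutoff. This should leave only prime powers $q$ below a few thousand to be checked.

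For the remaining finitely many prime powers I would use SageMath to test the condition of Lemma~\ref{divisores} directly, first with $\ell=\gcd(q^5-1,2\cdot3\cdot5)$ and $g=1$; whichever $q$ still fail that test get re-examined with a larger $\ell$ (more small primes allowed to be "free") or with $g$ equal to the product of the linear factors of $x^5-1$, exactly as was done in Propositions~\ref{casen}, \ref{casen3}, and \ref{casen4}. The claimed exceptional set $\{2,3,4,5,7,9,11,16\}$ should emerge as the prime powers for which every one of these tests is inconclusive.

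The main obstacle is purely computational bookkeeping rather than conceptual: getting the numerical cutoff from the sieve small enough that the SageMath verification over the surviving prime powers is feasible, and then pinning down precisely which small $q$ genuinely resist all choices of $(\ell,g)$. The delicate part is the choice of the parameter $t$ and of the auxiliary modulus $\ell$ for each stubborn $q$ — too small an $\ell$ leaves $W(\ell)^2$ controllable but $\delta$ may fail to be positive, while too large an $\ell$ blows up $W(\ell)^2$ in the inequality $q^{n/2}\geq (m_1+m_2+1)W(\ell)^2 W_q(g)\Delta$. As with the earlier cases, I expect a handful of prime powers (those near the cutoff, with $q^5-1$ unusually rich in small prime factors) to require the refined choice $g=\prod(\text{linear factors of }x^5-1)$ before they can be certified, and the eight listed values to be the ones where even that refinement does not suffice with the parameters tried.
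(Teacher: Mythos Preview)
Your strategy is correct and would reach the same conclusion, but it is more elaborate than what the paper actually does. The paper skips the intermediate analytic reduction entirely: once Proposition~\ref{casen} gives $(q,5)\in\Bb(3,2)$ for $q\geq 2.5\cdot 10^6$, it simply feeds \emph{all} $183404$ prime powers $q<2.5\cdot 10^6$ directly into the sieve test of Lemma~\ref{divisores} with $\ell=\gcd(q^5-1,\,2\cdot 3\cdot 5)$ and $g=1$, via SageMath. No product-of-primes bound on $r$, no optimisation over $t$, no $A_t$-style inequality is used at all for $n=5$; the computation is cheap enough that the detour you propose (modelled on the $n=3,4$ cases) is unnecessary. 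Your version trades a larger SageMath loop for some preliminary analytic work; the paper trades the analytic work for a longer but still trivial loop.

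One small point on the endgame: for the single stubborn survivor $q=31$, the paper re-runs Lemma~\ref{divisores} with a \emph{smaller} $\ell$ (namely $\ell=2\cdot 3$), not a larger one and not with a nontrivial $g$. Shrinking $\ell$ can help because the gain from halving $W(\ell)^2$ outweighs the loss in $\delta$ when the discarded prime (here $5$) already divides $q^5-1$ to a high power; your instinct to enlarge $\ell$ or refine $g$ would also work in principle, but is not what is done here.
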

\begin{proof}
From Lemma \ref{casen} we have
$(q,5)  \in \Bb(3,2)$ for 
$q\geq 2.5 \cdot 10^6$
so we assume that 
$q < 2.5 \cdot 10^6$.
There are $183404$ prime powers less than $2.5 \cdot 10^6$, and 
for those prime powers
we use  
Lemma \ref{divisores}, taking
$\ell=\gcd(q^5-1,2\cdot 3 \cdot 5)$ and $g=1$, together with SageMath  and we 
get that 
$(q,5)\in \mathcal{B}(3,2)$ for all prime powers $q$ except for 
$q \in \{ 2, 3, 4, 5, 7, 9, 11, 16, 31\}$.
We also get $(31,5) \in \Bb(3,2)$, using again Lemma \ref{divisores},
where we take
$\ell=2\cdot 3$ and $g=1$, together with SageMath.
\end{proof}

Now we will treat the cases where $2 \leq q \leq 19$.
In what follows we will frequently want to bound $W(M)$ with $M = q^n - 1$, 
where $q$ is a power of a prime $p$. Since $p \nmid q^n - 1$, instead of  
the number $A_t$, which appears in inequality \eqref{eq3} we will use
$$
	\tA_{t,p}:=A_{t,q^n-1}=\prod_{\substack{\wp < 2^t \\ \wp \neq p \\
			\wp \text{ is prime}}}
	\frac{2}{\sqrt[t]{\wp}} 
$$
and clearly we have 
\begin{equation*}
	W(q^n -1)
	\leq \tA_{t,p} \cdot (q^n - 1)^{\frac{1}{t}},
\end{equation*}

\begin{proposition}\label{case2_1}
For $q=2$ and $n \geq 3$ we have $(2,n) \in \mathcal{B}(3,2)$
for every $n \in \{13,17,19,21,22,23 \}$ and all $n \geq 25$.
\end{proposition}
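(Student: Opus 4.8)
\noindent The statement splits into an asymptotic part (all large $n$) and a finite computation (the intermediate range, plus the six listed values), and I would organise the proof along those lines. For the asymptotic part, the plan is to apply Corollary~\ref{mainresult}, which for $q=2$, $m_1=3$, $m_2=2$ requires $2^{n/2}\ge 6\,W(2^n-1)^2\,W_2(x^n-1)$. Since $n\ge 25>16$, one may use \cite[Lemma 2.11]{Lenstra} to get $W_2(x^n-1)\le 2^{(n+5)/4}$, and since $2\nmid 2^n-1$, Lemma~\ref{cota-t} gives $W(2^n-1)\le \tA_{t,2}\,(2^n-1)^{1/t}<\tA_{t,2}\,2^{n/t}$ for every real $t>0$. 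Substituting these into the displayed inequality, exactly as in the derivation of \eqref{condition-n-q4} but with $\tA_{t,2}$ in place of $A_t$, reduces the claim to $n(\tfrac14-\tfrac2t)\ge \log_2\!\bigl(6\,\tA_{t,2}^{\,2}\bigr)+\tfrac54$, valid for any $t>8$; choosing $t$ near its optimal value (a choice around $t=10$ is essentially best) produces an explicit threshold $N_0$ --- a purely numerical matter --- so that $(2,n)\in\Bb(3,2)$ for all $n\ge N_0$.

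For the finitely many remaining cases, namely $25\le n<N_0$ together with $n\in\{13,17,19,21,22,23\}$, Corollary~\ref{mainresult} is frequently too weak by itself (it already fails, for instance, when $n$ is divisible by several of $3,5,7,\dots$, so that $2^n-1$ carries many small prime factors and $x^n-1$ many low-degree irreducible factors over $\F_2$), so I would invoke the sieving criterion of Lemma~\ref{divisores}. For each such $n$ one takes $\ell=\gcd(2^n-1,\,N)$ with $N$ a product of the first few primes, chosen just large enough that, with $g$ a product of the lowest-degree irreducible factors of $x^n-1$, the quantity $\delta=1-2\sum_i p_i^{-1}-\sum_j 2^{-\deg P_j}$ is positive (the $p_i$ being the primes dividing $2^n-1$ but not $\ell$, and the $P_j$ the irreducible factors of $x^n-1$ not dividing $g$), and then checks with SageMath that $2^{n/2}\ge 6\,W(\ell)^2\,W_2(g)\,\Delta$ with $\Delta=\tfrac{2r+s-1}{\delta}+2$. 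For the six small values the numbers $2^n-1$ have very few prime factors (for $n=13,17,19$ it is a Mersenne prime, $2^{23}-1=47\cdot 178481$, and $2^{21}-1$, $2^{22}-1$ are likewise products of a handful of primes), so a small $N$ and $g=x-1$ already suffice; these are precisely the $n<25$ for which so light a choice works, which is why they, and not $14,15,16,18,20,24$, appear in the statement.

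The part I expect to be the real obstacle is the band $25\le n<N_0$: with the best honest choice of $t$ the threshold $N_0$ is of the order of $10^3$, so this is a substantial --- though entirely finite --- computation, and for every single $n$ in the band one must exhibit a choice of $\ell$ and $g$ that simultaneously makes $\delta>0$ and passes the final inequality. The tension is that the only way to force $\delta>0$ for an $n$ whose $2^n-1$ is rich in small primes is to enlarge $\ell$, which immediately enlarges $W(\ell)^2$; so for the stubborn $n$ one additionally has to enlarge $g$ to include the quadratic (and, exceptionally, cubic) factors of $x^n-1$ in order to lower $s$, hence $\Delta$. Verifying that this always succeeds throughout the band, and separately disposing of $n=13$ (which lies below the range $n\ge 16$ where Lenstra's bound is available, so the asymptotic step does not apply to it) by the sieve, completes the argument.
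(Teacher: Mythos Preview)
Your proposal is correct and follows the same overall strategy as the paper: an asymptotic step via Corollary~\ref{mainresult} combined with Lenstra's bound and $\tA_{t,2}$ (the paper takes $t=9.8$, giving $N_0=1237$), followed by a finite computation. The one organizational difference worth noting is that the paper does \emph{not} jump straight to the sieve for the whole band $25\le n<N_0$. Instead it first retests Corollary~\ref{mainresult} with the exact value of $W_2(x^n-1)$ (and, for $n\le 155$, also the exact $W(2^n-1)$), which already disposes of every $n\ge 17$ except $n\in\{18,20,21,22,24,28,30,36,45\}$. Only these nine values, together with $3\le n\le 16$, are then fed to Lemma~\ref{divisores}, with $\ell=\gcd(2^n-1,3\cdot 5)$ and $g$ the product of the irreducible factors of $x^n-1$ of degree $k$ satisfying $2^k\le 2n$; the sieve succeeds precisely for $n\in\{13,21,22,28,30,36,45\}$. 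Your plan would also work in principle --- taking $\ell$ and $g$ full in the sieve recovers Corollary~\ref{mainresult} exactly, so no case can be lost --- but the paper's intermediate step reduces the sieve from over a thousand cases to about a dozen.
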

\begin{proof}
To test if $(2,n) \in \mathcal{B}(3,2)$ we start by proceeding as in the second 
paragraph of the proof of Proposition \ref{Asymptotic}, with the difference 
that, instead of using $A_t$  we use the number $\tA_{t,2}$ introduced above
$$
	\tA_{t,2}=\prod_{\substack{\wp < 2^t \\ \wp \neq 2 \\
			\wp \text{ is prime}}}
	\frac{2}{\sqrt[t]{\wp}} . 
	$$
Thus we have $W(2^n-1) \leq \tA_{t,2} \cdot 
2^{\frac{n}{t}}$ and following the proof
we get that if for 
some $t > 8$ we have
\[
n > \frac{4t}{t-8}
			\left(
			\frac{\ln (6 \cdot \tA_{t,2}^2)}{\ln 2} + \frac{5}{4}
			\right)
\]
(see \eqref{condition-n-q4}) then $(2,n) \in \mathcal{B}(3,2)$, and taking 
$t = 9.8$ we get that $(2,n) \in \mathcal{B}(3,2)$ for $n \geq 1237$. Next, 
using SageMath, we get that the inequality 
$2^{\frac{n}{2}} \geq 6 \cdot \tA_{t,2}^2 \cdot 2^{\frac{2n}{t}} \cdot 
W_2(x^n-1)$,
when we take $t=8.1$, holds for 
$156 \leq n \leq 1236$, so from Corollary \ref{mainresult} we also have $(2,n) 
\in \mathcal{B}(3,2)$ for these values of $n$.  Now we use SageMath to test the 
inequality  
$2^{\frac{n}{2}} \geq 6 \cdot W(2^n-1)^2 \cdot W_2(x^n-1)$ of Corollary 
\ref{mainresult} and we get that it 
holds for $n \geq 17$, except  $n \in \{ 18,20,21,22,24,28,30,36,45\}$. 
For these exceptions and $n$ such that $3 \leq n \leq 16$ we use SageMath and 
Lemma \ref{divisores} with $\ell=\gcd (2^n-1, 3 \cdot 5)$ and $g$ as the product 
of the monic irreducible factors of $x^n-1$ of degree $k$, with
$2^k\leq 2\cdot n$  (in order to increase the chance of getting a positive 
$\delta$) and we get $(2,n) \in \mathcal{B}(3,2)$
for $n \in \{13, 21, 22, 28, 30, 36, 45 \}$.
\end{proof}

\begin{proposition}\label{case3_1}
For $q=3$ and $n \geq 3$, we have $(3,n) \in \mathcal{B}(3,2)$
for every $n \in \{11,13,14,15\}$ and all $n \geq 17$.
\end{proposition}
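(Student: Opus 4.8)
The plan is to imitate the argument used for $q=2$ in Proposition \ref{case2_1}. Since $3 \nmid 3^n-1$, I would bound $W(3^n-1)$ using the constant $\tA_{t,3}$ introduced before Proposition \ref{case2_1}, so that $W(3^n-1) \le \tA_{t,3}\cdot (3^n-1)^{1/t}$ for every $t>0$. For $n \ge 16$ we have $W_3(x^n-1) \le 2^{(n+4)/3}$ by \cite[Lemma 2.11]{Lenstra}, and plugging these two bounds together with $m_1+m_2+1 = 6$ into the sufficient condition of Corollary \ref{mainresult} yields an inequality equivalent to the $q=3$ case of \eqref{condition-n-q4}, namely
\[
n \geq \frac{\ln\!\big(6\,\tA_{t,3}^{2}\big) + \tfrac{4}{3}\ln 2}{\big(\tfrac{t-4}{2t}\big)\ln 3 - \tfrac13\ln 2}
\]
for some real $t \ge 7$. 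Choosing $t$ suitably (of the order of $7$ to $10$) this gives $(3,n)\in\mathcal{B}(3,2)$ for all $n$ above some explicit threshold $N_1$.

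Next I would shrink the threshold in two computational steps, exactly as in Proposition \ref{case2_1}. First, for $n$ in the range $N_2 \le n < N_1$, I would use SageMath to verify the inequality $3^{n/2} \ge 6\,\tA_{t,3}^{2}\,3^{2n/t}\,W_3(x^n-1)$ (Corollary \ref{mainresult} combined with the bound \eqref{eq3} via Lemma \ref{cota-t}) for a well-chosen $t$, getting $(3,n)\in\mathcal{B}(3,2)$ there. Second, for $17 \le n < N_2$, I would test directly the condition $3^{n/2} \ge 6\,W(3^n-1)^2\,W_3(x^n-1)$ of Corollary \ref{mainresult}; this should hold for all $n \ge 17$ except for a short list $E$ of borderline values (the analogue of the list $\{18,20,21,22,24,28,30,36,45\}$ that occurred for $q=2$).

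It remains to settle the finitely many $n$ with $3 \le n \le 16$ together with the exceptional values in $E$. For these I would apply the sieve of Lemma \ref{divisores}: since $3 \nmid 3^n-1$ one takes $\ell = \gcd(3^n-1,\, 2\cdot 5\cdot 7)$ (or a comparable product of the smallest primes dividing $3^n-1$) and $g$ equal to the product of the monic irreducible factors of $x^n-1$ over $\F_3$ of degree $k$ with $2^k \le 2n$, the latter choice being made to force $\delta>0$ and keep $\Delta$ small, and then one checks the condition $3^{n/2} \ge 6\,W(\ell)^2 W_3(g)\,\Delta$ by computer. This is expected to succeed for $n \in \{11,13,14,15\}$ and for every value in $E$, while failing for $n \in \{3,\dots,10,12,16\}$, which are therefore left as possible exceptions.

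\textbf{The main obstacle} is precisely the handful of borderline $n$, especially $n = 11,14,15$ (and $16$, which does not survive): for these $3^{n/2}$ is small while $3^n-1$ tends to have many small prime factors and $x^n-1$ many low-degree factors over $\F_3$, so Corollary \ref{mainresult} fails outright and even Lemma \ref{divisores} only works after a careful, case-by-case tuning of the pair $(\ell,g)$ so that $\delta$ stays positive and $\Delta\,W(\ell)^2 W_3(g)$ stays below $3^{n/2}/6$. Which borderline $n$ can be pushed through, and which cannot, is what the explicit SageMath verification decides; the cases $n\le 10$, $n=12$ and $n=16$ appear to be out of reach of this method and hence are the excluded values in the statement.
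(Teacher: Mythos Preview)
Your proposal is correct and follows essentially the same approach as the paper: the paper also uses $\tA_{t,3}$ together with \eqref{condition-n-q4} (with $t=8.8$, giving $N_1=373$), then the inequality $3^{n/2}\ge 6\,\tA_{t,3}^2\,3^{2n/t}W_3(x^n-1)$ with $t=8$ down to $N_2=122$, then Corollary \ref{mainresult} directly for $17\le n\le 121$ (the exceptional list is $E=\{18,20,22,24\}$), and finally Lemma \ref{divisores} for $3\le n\le 16$ and $n\in E$. The only cosmetic differences are the sieve parameters in the last step: the paper takes $\ell=\gcd(3^n-1,\,2\cdot 5)$ and $g$ equal to the product of the monic irreducible \emph{linear} factors of $x^n-1$, rather than your slightly larger choices, and with these it obtains $(3,n)\in\Bb(3,2)$ precisely for $n\in\{11,13,14,15,18,20,22,24\}$.
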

\begin{proof}
As we did above, we proceed as in the second paragraph of 
the proof of Proposition \ref{Asymptotic}, and since $3 \nmid 3^n - 1$,
instead of $A_t$ we use $\tA_{t,3}$ so that  
$W(3^n-1) \leq \tA_{t,3} \cdot 3^\frac{n}{t}$ holds, with $t > 0$.
From inequality \eqref{condition-n-q4} for the case $q = 3$, and taking $t = 
8.8$, we get $(3,n)\in \Bb(3,2)$ for $n \geq 373$. 
For $122 \leq n \leq 372$ calculations with SageMath show that the inequality 
$3^{\frac{n}{2}} \geq 6 \cdot \tA_{t,3}^2 \cdot 3^{\frac{2n}{t}} \cdot 
W_3(x^n-1)$ 
holds if we take $t = 8$, so from Corollary \ref{mainresult} we get $(3,n)
\in \mathcal{B}(3,2)$ for $n$ in this range.
For $3 \leq n \leq 121$ and we use SageMath  to test inequality
$3^{\frac{n}{2}} \geq 6 \cdot W(3^n - 1)^2 \cdot W_3(x^n-1)$
of Corollary \ref{mainresult} and we get that $(3,n)\in \Bb(3,2)$ 
for $17 \leq n \leq 121$ except for $n \in \{ 18,20,22,24\}$. For $3 \leq n 
\leq 16$ and $n \in \{ 18,20,22,24\}$ we use SageMath to test the inequality 
which appears in Lemma \ref{divisores}, taking 
$\ell=\gcd (3^n-1, 2 \cdot 5)$ and $g$ as the product of 
the monic irreducible linear factors of $x^n-1$ and we get  $(3,n)\in 
\Bb(3,2)$ for $n \in \{11, 13, 14, 15, 18, 20, 22, 24 \}$.
\end{proof}

\begin{proposition}\label{case2_2}
For $q=4$ and $n \geq 3$, we have $(4,n) \in \mathcal{B}(3,2)$
for $n=11$ and all $n \geq 13$.
\end{proposition}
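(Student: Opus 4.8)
The proof will follow the three-stage pattern of Propositions~\ref{case2_1} and~\ref{case3_1}. Since $q=4=2^2$ we have $2\nmid 4^n-1$, so throughout we may use the sharper constant $\tA_{t,2}$ in place of $A_t$, giving $W(4^n-1)\leq\tA_{t,2}\cdot(4^n-1)^{1/t}\leq\tA_{t,2}\cdot 4^{n/t}$ for every real $t>0$. Proceeding as in the second paragraph of the proof of Proposition~\ref{Asymptotic} but with $\tA_{t,2}$ instead of $A_t$, and using the bound $W_4(x^n-1)\leq 2^{n/3+2}$ from \cite[Lemma 2.11]{Lenstra} valid for $n\geq 16$, the $q=4$ case of inequality~\eqref{condition-n-q4} shows that $(4,n)\in\Bb(3,2)$ whenever
\[
n\geq\frac{3t}{t-6}\left(\frac{\ln(6\cdot\tA_{t,2}^2)}{\ln 4}+1\right)
\]
for some $t>6$; choosing an appropriate $t$ (around $t\approx 9$, as with the values $9.8$ and $8.8$ used for $q=2$ and $q=3$) yields $(4,n)\in\Bb(3,2)$ for all $n$ above some explicit threshold $N_0$.

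Next I would treat the intermediate range $16\leq n\leq N_0$ by having SageMath verify the smoothed inequality $4^{n/2}\geq 6\cdot\tA_{t,2}^2\cdot 4^{2n/t}\cdot W_4(x^n-1)$ from Corollary~\ref{mainresult} with a slightly smaller exponent (around $t=8$); this should cover the whole range. Then, pushing down further, I would test the unsmoothed inequality $4^{n/2}\geq 6\cdot W(4^n-1)^2\cdot W_4(x^n-1)$ of Corollary~\ref{mainresult} directly, which should hold for all $n$ above some small bound except for a short list of values where $x^n-1$ factors heavily over $\F_4$ (one expects exceptions of the shape $n\in\{18,20,22,24,\dots\}$, in analogy with the cases $q=2,3$). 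This leaves only the small values $3\leq n\leq$ (small bound), together with those few larger exceptions, to be handled separately.

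For the remaining values of $n$ I would invoke the sieve inequality of Lemma~\ref{divisores}, taking $\ell=\gcd(4^n-1,\,2\cdot 3\cdot 5)$ and $g$ the product of the monic irreducible linear factors of $x^n-1$ over $\F_4$ --- or, when this gives $\delta\leq 0$, the product of the low-degree irreducible factors with $2^k\leq 2n$, exactly as in Proposition~\ref{case2_1} --- and check with SageMath that $4^{n/2}\geq 6\,W(\ell)^2 W_4(g)\,\Delta$ holds. Putting the stages together, the outcome should be that $(4,n)\in\Bb(3,2)$ precisely for $n=11$ and for all $n\geq 13$, which is the assertion. The main obstacle, as in the two preceding propositions, lies in the borderline small $n$: for $q=4$ the polynomial $x^n-1$ has comparatively many low-degree irreducible factors, so keeping $\delta=1-2\sum 1/p_i-\sum q^{-\deg P_i}>0$ and $\Delta$ small forces a careful trade-off between how many primes are put into $\ell$ and how many factors into $g$; the value $n=12$ is the one for which no admissible choice succeeds, and that is why it is omitted from the statement.
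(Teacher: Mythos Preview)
Your three-stage plan matches the paper's proof exactly in structure. The actual parameters used there are $t=8$ in \eqref{condition-n-q4} (giving the threshold $n\geq 163$), then $t=7$ in the smoothed Corollary~\ref{mainresult} inequality for $86\le n\le 162$; the unsmoothed test for $3\le n\le 85$ leaves the exceptions $n\in\{14,15,18,21,30\}$ (together with $n\le 12$), and Lemma~\ref{divisores} with $\ell=\gcd(4^n-1,3\cdot 5\cdot 7)$ and $g$ the product of the linear factors of $x^n-1$ then disposes of $n\in\{11,14,15,18,21,30\}$.
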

\begin{proof}
Again we proceed as in the proof of Proposition \ref{Asymptotic}, again 
taking $\tA_{t,2}$ in place of $A_t$, and from inequality
\eqref{condition-n-q4} for the case $q = 4$,  taking 
$t=8$, we get $(4,n)\in \Bb(3,2)$
for $n \geq 163$. Using SageMath to test inequality
$4^{\frac{n}{2}} \geq 6 \cdot \tA_{t,2}^2 \cdot 4^{\frac{2n}{t}} \cdot W_4(x^n-1)$
with $t=7$
and we get that $(4, n) \in  \mathcal{B}(3,2)$ for $86 \leq n \leq 162$.
Now we use SageMath we test the inequality 
$4^{\frac{n}{2}} \geq 6 \cdot W(4^n-1)^2 \cdot W_4(x^n-1)$, from Corollary 
\ref{mainresult} 
for $3 \leq n \leq 85$ and we get
that $(4,n)\in \Bb(3,2)$ for
$n\geq 13$, except possibly for  $n \in \{ 14, 15, 18, 21, 30\}$.
Finally we use SageMath to test the condition
$4^{\frac{n}{2}} \geq 6 W(\ell)^2W_4(g) \Delta$
from Lemma \ref{divisores}, taking 
$\ell=\gcd(4^n-1, 3\cdot 5\cdot 7)$ and $g$ as the product of 
the linear factors of $x^n-1$, and we get 
$(4,n)\in \Bb(3,2)$  for $n \in \{11, 14, 15, 18, 21, 30\}$.
\end{proof}

\begin{proposition}\label{case5_1}
	For $q=5$ and $n \geq 3$, we have $(5,n) \in \mathcal{B}(3,2)$
	for all $n \geq 13$ and for
	$n \in \{7,9,10,11\}$.
\end{proposition}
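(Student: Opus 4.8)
The plan is to run the same four-step scheme used for $q\in\{2,3,4\}$ in Propositions \ref{case2_1}, \ref{case3_1} and \ref{case2_2}, now with $q=5$ (so $m_1+m_2+1=6$): an asymptotic step handling all $n$ beyond an explicit threshold, two SageMath passes handling an intermediate range and a small range, and a sieve step mopping up the few exceptions. Since $5$ is prime I would use $W(5^n-1)\leq\tA_{t,5}\cdot 5^{n/t}$ with $\tA_{t,5}=\prod_{\wp<2^t,\,\wp\neq 5}2/\sqrt[t]{\wp}$ as in the other $q\leq 19$ cases. The one point where $q=5$ really differs from $q\in\{2,3,4\}$ is that \cite[Lemma 2.11]{Lenstra} is not available, so in the asymptotic step I would replace the crude bound $W_5(x^n-1)\leq 2^n$ used in the proof of Proposition \ref{Asymptotic} by $W_5(x^n-1)\leq 2^{\lfloor n/2\rfloor+2}$; this is valid because the number of monic irreducible factors of $x^n-1$ over $\F_5$ is $\sum_{d\mid n'}\phi(d)/\mathrm{ord}_d(5)$ (with $n'$ the prime-to-$5$ part of $n$), and since $\mathrm{ord}_d(5)\geq 2$ whenever $d\nmid 4$ this sum is at most $\tfrac n2+2$. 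Inserting this into the argument of Proposition \ref{Asymptotic} replaces the condition of Corollary \ref{mainresult} by $5^{n/2}\geq 6\cdot\tA_{t,5}^{\,2}\cdot 5^{2n/t}\cdot 2^{n/2+2}$, which holds for all large $n$ already for moderate $t$ — any $t$ with $\tfrac{t-4}{2t}\ln 5>\tfrac12\ln 2$ works, e.g.\ $t=8$ — so one obtains an explicit, fairly small $N_1$ with $(5,n)\in\mathcal{B}(3,2)$ for all $n\geq N_1$.

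For $n<N_1$ I would use SageMath in two passes, exactly as in the cases $q=2,3,4$: first verify the sharper inequality $5^{n/2}\geq 6\cdot\tA_{t,5}^{\,2}\cdot 5^{2n/t}\cdot W_5(x^n-1)$ — now with $W_5(x^n-1)$ computed exactly (which only needs the factorisation of $n$) and $t$ taken somewhat smaller — to cover an interval $[N_2,N_1-1]$; then, for $3\leq n\leq N_2-1$, test directly the plain condition $5^{n/2}\geq 6\cdot W(5^n-1)^2\cdot W_5(x^n-1)$ of Corollary \ref{mainresult}. As in the earlier propositions, this last pass should already give $(5,n)\in\mathcal{B}(3,2)$ for every $n\geq 13$ apart from a short explicit list of small exceptions, and in particular it should produce the values $n\in\{7,9,10,11\}$ required by the statement; the truly small cases $n\in\{3,4,5,6,8,12\}$ are, as in the analogous results, simply not asserted.

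Finally, for each exceptional $n$ left over I would invoke the sieve of Lemma \ref{divisores} with $\ell=\gcd(5^n-1,\,2\cdot 3\cdot 7)$ and $g=1$, enlarging $g$ to the product of the low-degree monic irreducible factors of $x^n-1$ whenever a larger $\delta$ is needed, and check with SageMath that $\delta>0$ and $5^{n/2}\geq 6\,W(\ell)^2W_5(g)\,\Delta$. For the small $n$ this is painless; for instance when $n=7$ one has $5^7-1=2^2\cdot 19531$ with $19531$ prime, so $\ell=2$, $\delta=1-\tfrac{2}{19531}-\tfrac15-\tfrac{1}{5^6}>0$, $\Delta<6$, and $5^{7/2}\approx 279\geq 6\cdot 2^2\cdot 1\cdot\Delta$. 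The main obstacle is precisely this last step for the handful of stubborn $n$: one has to balance keeping $W(\ell)$ small against choosing $g$ so that the primes of $5^n-1$ lying outside $\ell$ and the irreducible factors of $x^n-1$ lying outside $g$ contribute little to $\delta$, so as to keep $\delta>0$ and $\Delta$ bounded; once a workable pair $(\ell,g)$ has been found the remaining numerical verification is mechanical.
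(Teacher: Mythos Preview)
Your proposal is correct and follows essentially the same four-step scheme as the paper's proof. The one substantive difference is your claim that \cite[Lemma 2.11]{Lenstra} is unavailable for $q=5$: in fact the paper invokes exactly that lemma to obtain $W_5(x^n-1)\leq 2^{n/3+6}$, which is sharper than your hand-made bound $2^{n/2+2}$ and leads directly (with $t=7.8$) to the threshold $n\geq 127$. With that bound the paper dispenses with your intermediate $\tA_{t,5}$ pass and goes straight to the exact test of Corollary \ref{mainresult} for $3\leq n\leq 126$; this already yields all $n\geq 25$ together with $n\in\{11,13,15,17,19,20,21,22,23\}$, and the sieve of Lemma \ref{divisores} (with $\ell=\gcd(5^n-1,2\cdot 3)$ and $g$ the product of the linear factors of $x^n-1$, respectively $g=1$ for $n=10$) finishes $n\in\{7,9,10,14,16,18,24\}$. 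So your plan works, but note that the plain Corollary test does \emph{not} capture $n\in\{7,9,10\}$ --- those genuinely require the sieve, contrary to what your third paragraph anticipates --- and your $n=7$ computation is exactly the kind of verification needed there.
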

\begin{proof}
From 	
	\cite[Lemma 2.11]{Lenstra} we have 
	$W_5(x^n-1) \leq 
	2^{\frac{n}{3}+6}$ and from Lemma \ref{cota-t} we have
	$W(5^n-1)\leq \tA_{t,5} \cdot 5^{\frac{n}{t}}$, with  $t > 0$ a real number.
	From Theorem \ref{principal} we get that  if
	$5^{\frac{n}{2}} \geq 6 W(5^{n}-1)^2 W_5(x^n-1)$
	then $(5,n) \in \mathcal{B}(3,2)$. Hence 
	$(5,n) \in \mathcal{B}(3,2)$ if $n$ satisfies
$$
	5^{\frac{n}{2}} \geq 6 \cdot \tA_{t,5}^2 \cdot 5^{\frac{2n}{t}} \cdot 2^{\frac{n}{3}+6}
$$
for some real number $t > 0$. 
Last inequality is equivalent to
\begin{eqnarray}\label{cond5_1}
n \geq
\frac{\ln \left( 6 \tA_{t,5}^2 \right)+ 6 \ln 2}%
{(\frac{t-4}{2t})  \ln 5 - \frac{1}{3} \ln 2} .
\end{eqnarray}
when $(\frac{t-4}{2t})  \ln 5 - \frac{1}{3} \ln 2 > 0$, which holds e.g.\ if $t 
> 5.62$.
For $t=7.8$ we get that 
\eqref{cond5_1} is satisfied for
$n\geq 127$.
Next we use SageMath to test the condition
$5^{\frac{n}{2}} \geq 6 W(5^{n}-1)^2 W_5(x^n-1)$ of Corollary \ref{mainresult} 
for $3 \leq n\leq 126$ and we get that $(5,n) \in \Bb(3,2)$
for all
$n \geq 25$ and for
	$n \in \{ 11,13,15,17,19,20,21,22,23\}$.  
Then we	use SageMath to test the condition
$q^{\frac{n}{2}} \geq 6 \cdot W(\ell)^2 \cdot W_5(g) \cdot \Delta$
of Lemma \ref{divisores}, with 
$\ell=\gcd(q^n-1, 2\cdot 3)$ and $g$ as the product of 
the linear factors of $x^n-1$, and we get 
$(5,n) \in \mathcal{B}(3,2)$ for 
$n \in \{7,9,14,16,18,24\}$.
The same inequality with $n = 10$, $\ell=2\cdot 3$ and  $g=1$,
shows that $(5,10) \in \mathcal{B}(3,2)$.
\end{proof}

\begin{proposition}\label{caseq_low}
Let $q\in \{ 7, 8, 9, 11, 13, 16, 17, 19\}$
and $n\geq 3$.
We have $(q,n) \in \mathcal{B}(3,2)$ except possibly for
$$
\begin{array}{l}
(7, 3),(7, 4),(7, 5),(7, 6),(7, 8),(7, 12),(8, 3),(8, 4),(8, 7),
(9, 3),(9, 4),\\(9, 5),(9, 6),(9, 8),(11, 3),(11, 4),(11, 5),(11, 6),
(13, 3),(13, 4),(13, 6),\\(16, 3),(16, 4),(16, 5),
(17, 3),(17, 4),(19, 3),(19, 4),(19, 6).
\end{array}
$$
\end{proposition}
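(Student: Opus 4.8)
The plan is to treat each of the eight values of $q$ in turn, following the same three--step scheme used in Propositions \ref{case2_1}, \ref{case3_1}, \ref{case2_2} and \ref{case5_1}. For the first step, since each such $q$ is a power of a prime $p \in \{2,3,7,11,13,17,19\}$ with $p \nmid q^n-1$, I would bound $W(q^n-1) \leq \tA_{t,p}\cdot (q^n-1)^{1/t}$ and use $W_q(x^n-1) \leq 2^n$ (or, when $q$ is among the values covered by \cite[Lemma 2.11]{Lenstra}, the sharper estimate given there). Substituting these into the condition $q^{n/2} \geq 6\, W(q^n-1)^2\, W_q(x^n-1)$ of Corollary \ref{mainresult} turns it into an inequality of the shape $\left( q^{(t-4)/(2t)}/2 \right)^n \geq 6\,\tA_{t,p}^2$, which, once a suitable real number $t>4$ is fixed, holds for every $n$ beyond an explicit threshold $N_q$; this settles all sufficiently large $n$ for each $q$.

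For the second step, for the finitely many $n$ below $N_q$ I would have SageMath test, for each remaining pair $(q,n)$, the inequality $q^{n/2} \geq 6\,\tA_{t,p}^2\, q^{2n/t}\, W_q(x^n-1)$ with the \emph{actual} value of $W_q(x^n-1)$ and a well chosen $t$; this clears the middle range and leaves only a short list of genuinely small $n$ (the surviving candidates being $n \le 12$). The third step handles these with the sieve inequality of Lemma \ref{divisores}: for each pair I would take $\ell = \gcd(q^n-1,\,2\cdot 3\cdot 5\cdot 7)$ — or a smaller divisor such as $\gcd(q^n-1,2\cdot 3)$ or $2\cdot 3$ when the larger choice makes $\delta \le 0$ — and let $g$ be the product of the monic irreducible factors of $x^n-1$ of small degree, chosen so that $\delta = 1 - 2\sum_i 1/p_i - \sum_j 1/q^{\deg P_j} > 0$ and $\Delta = (2r+s-1)/\delta + 2$ stays as small as possible, and then verify $q^{n/2} \geq 6\, W(\ell)^2\, W_q(g)\,\Delta$ numerically. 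The pairs for which no admissible choice of $(\ell,g)$ makes this hold are precisely the ones collected in the displayed exception list.

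The main obstacle is this last step: when $q$ and $n$ are both small, $q^{n/2}$ is only marginally larger than (or even smaller than) $W(q^n-1)^2\,W_q(x^n-1)$, so Corollary \ref{mainresult} is powerless and one must carefully balance the sieve parameters, playing a larger $g$ (which improves the bound through a smaller $\Delta$ but worsens it through $W_q(g)$) against the size of $W(\ell)$, all while keeping $\delta>0$, a condition that already severely restricts the admissible choices. For a handful of pairs every admissible choice still fails, which is exactly why those pairs are recorded as possible exceptions rather than resolved here.
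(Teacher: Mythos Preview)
Your three-step scheme is exactly the one the paper follows, and the sieve choices you propose for step~3 are in the same spirit. The one substantive difference is your bound on $W_q(x^n-1)$ in step~1. None of the eight values of $q$ here are covered by \cite[Lemma~2.11]{Lenstra} (in the paper that lemma is invoked only for $q\le 5$), so you would be left with the crude $W_q(x^n-1)\le 2^n$. The paper instead uses Lenstra's inequality \cite[(2.10)]{Lenstra},
\[
s \;\le\; \tfrac12\bigl(n+\gcd(n,q-1)\bigr),
\]
and observes that for $n\ge 19$ and $q\le 19$ the gcd is a proper divisor of $n$, hence at most $n/2$, yielding $W_q(x^n-1)\le 2^{3n/4}$. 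That saving of $n/4$ in the exponent is what keeps the thresholds in the low hundreds (Table~\ref{Caseq_1}: $N_7=649$ down to $N_{19}=84$). With the bare $2^n$ bound you would need $t\gtrsim 14$ for $q=7$ just to make the base $q^{(t-4)/(2t)}/2$ exceed~$1$, and the resulting $N_q$ would be on the order of $10^5$; your scheme still terminates in principle, but this refinement is what makes the computation short.

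A minor factual correction: step~2 does not cut the survivors down to $n\le 12$. In the paper's run, pairs such as $(16,45)$ and $(13,12)$ survive both the $\tA_{t,p}$-test and a first pass through Lemma~\ref{divisores} with $\ell=\gcd(q^n-1,2\cdot3\cdot5)$ and $g$ the product of the linear factors; they are only cleared by a second sieve pass with hand-tuned parameters (for $(13,12)$: $\ell=2\cdot3\cdot5\cdot7$, $g=(x-1)(x+1)$; for $(16,45)$: $\ell=3\cdot5\cdot7$, $g$ the product of the linear factors). So expect a handful of stubborn pairs with moderate $n$ that need individually chosen $(\ell,g)$ before you reach the displayed exception list.
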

\begin{proof}
Let $s$ be the number of monic irreducible factors of 
	$x^n-1 \in \F_q[x]$, then $W_q(x^n-1) = 2^s$ and  
	from \cite[Inequality (2.10)]{Lenstra} we have 
	$$
	s \leq \frac{1}{2} \left( n + \gcd(n,q-1) \right).
	$$
Let's assume that $n \geq 19$, then $\gcd(n,q-1) \leq n/2$ and we get  
$W_q(x^n-1) \leq 2^{\frac{3n}{4}}$. Let 
$p=\mathop{\rm{char}}(\F_q)$, 
 from  Lemma \ref{cota-t} and the fact  that $p \nmid q^n - 1$ we get 
$W(q^n - 1)\leq \tA_{t,p} \cdot (q^n - 1)^{\frac{1}{t}}$ and from 
Corollary \ref{mainresult} we have that 
if 
$$
q^{\frac{n}{2}} \geq 6 \cdot \tA_{t,p}^2 \cdot q^{\frac{2n}{t}} \cdot 
2^{\frac{3n}{4}}
$$
for some real number $t > 0$ then $(q,n) \in \mathcal{B}(3,2)$.
The above inequality is equivalent to
\begin{eqnarray}\label{condq_2}
n \geq
\frac{\ln \left( 6 \cdot \tA_{t.p}^2 \right)}%
{(\frac{t-4}{2t})  \ln q - \frac{3}{4} \ln 2} .
\end{eqnarray}
provided that $(\frac{t-4}{2t})  \ln q - \frac{3}{4} \ln 2 > 0$.
In Table \ref{Caseq_1} we show values of $q$ and ranges of $n$ for which this 
inequality 
holds, together with the values of $t$ which were used in each case.
\begin{table}[h]
\centering
\begin{tabular}{ccc|ccc|ccc|ccc}
$t$ & $q$ & $n$ & $t$  &$q$ & $n$ & $t$  &$q$ & $n$ & $t$  &$q$ & $n$\\
\hline
$10.4$ & $7$  & $n \geq 649$ &
        $9.4$& $9$ & $n \geq 289$  
			& $8.6$ &  $13$  & $n \geq 138$ 
				& $8.1$ &  $17$  & $n \geq 95$ \\
$9.8$ & $8$  & $n \geq 403$ & 
		$9$ & $11$ & $n \geq 186$ &
			$8.1$ & $16$ & $n \geq 100$ &
				$8$ & $19$ & $n \geq 84$
\end{tabular}
\caption{Values of $q$, $n$ and a real number $t$ for which
$(q,n)\in \mathcal{B}(3,2)$}
\label{Caseq_1}
\end{table}

\noindent
For $7 \leq q \leq 19$ we test condition
$q^{\frac{n}{2}} \geq 6 \tA_{t,p}^2 q^{\frac{2n}{t}} W_q(x^n-1)$,
with $\tA_{t,p}$ as in \eqref{eq3}
and $t=8$, for pairs $(q,n)$ which are not in Table \ref{Caseq_1}
and for those pairs where this condition does not hold we test the inequality
$q^{\frac{n}{2}} \geq (m_1 + m_2+1)  W(\ell)^2 W_q(g)  \Delta$
of Lemma \ref{divisores},
taking $\ell=\gcd(q^n-1, 2\cdot 3 \cdot 5)$ and $g$ as the product of 
the linear factors of $x^n-1$. In this way we get $(q,n)\in \Bb(3,2)$, with 
$n\geq 3$, except for 
the following pairs:
$$
\begin{array}{l}
(7, 3),(7, 4),(7, 5),(7, 6),(7, 7),(7, 8),(7, 9),(7, 12),\\
(8, 3),(8, 4),(8, 6),(8, 7),(9, 3),(9, 4),(9, 5),(9, 6),(9, 8),\\
(11, 3),(11, 4),(11, 5),(11, 6),(11, 10),\\
(13, 3),(13, 4),(13, 5),(13, 6),(13, 8),(13, 12),\\
(16, 3),(16, 4),(16, 5),(16, 6),(16, 45),\\
(17, 3),(17, 4),(17, 6),(17, 8),(19, 3),(19, 4),(19, 6).
\end{array}
$$
Now, for this set of pairs, we use again Lemma \ref{divisores}
with $\ell=\gcd(q^n-1, 2\cdot 3 \cdot 5)$ and $g=1$
and we get $(q,n) \in \Bb(3,2)$ for 
$$
(q,n) \in \{ (7, 7), (7, 9), (8, 6), (11, 10), (13, 5), (13, 8), (16, 6), (17, 6), (17, 8)\}.
$$
We also get  $(13,12),(16,45) \in \Bb(3,2)$ using Lemma \ref{divisores}:
for $q=13$ and $n=12$ we take
$\ell=2\cdot 3\cdot 5 \cdot 7$ and $g=(x-1)(x+1)$,
and for $q=16$ and $n=45$ we take
$\ell=3 \cdot 5 \cdot 7$ and $g$ as the product
of all linear factors of $x^n-1$.
\end{proof}

The following results
are modeled after \cite[Propositions 3.3 and 3.4]{CGJV}, and will be needed
in what follows.
For a prime power $q$ and $n \in \mathbb{N}$ we denote by 
$\mathfrak{N}(q,n)$ the number of primitive elements 
of $\mathbb{F}_{q^n}$ which are normal over 
$\mathbb{F}_q$.


\begin{proposition}\label{noBb}
Let $q$ be a prime power
and $n\geq 3$. 
If $\mathfrak{N}(q,n) \leq m_1 + m_2+1$, then $(q,n) \notin \Bb(m_1,m_2)$.
\end{proposition}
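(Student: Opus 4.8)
The plan is to argue by a counting/pigeonhole contradiction. Suppose $(q,n)\in\Bb(m_1,m_2)$ in order to derive that $\mathfrak{N}(q,n)$ must be strictly larger than $m_1+m_2+1$. The idea is to exhibit a single element $f\in\Upsilon_{q^n}(m_1,m_2)$ for which membership in $\Bb(m_1,m_2)$ forces many distinct primitive-normal elements: namely, pick $f(x)$ to be (essentially) a linear polynomial $f(x)=x-c$ with a suitable constant $c\in\F_{q^n}$ adjusted so that $f\in\Upsilon_{q^n}(m_1,m_2)$ — condition (iii) in Definition \ref{upsilonset} is about the factorization of $f_1f_2$ over $\F_q$ and is readily arranged since $m_1,m_2\geq 1$. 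With such an $f$, if $(q,n)\in\Bb(m_1,m_2)$ then for this particular $f$ there is a primitive $\alpha$, normal over $\F_q$, with $f(\alpha)=\alpha-c$ also primitive.

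First I would set up the double-counting. For each choice of $c$ in an appropriate set, membership in $\Bb$ produces a primitive normal $\alpha$ with $\alpha-c$ primitive; the point is that as $c$ ranges over a set of size $>m_1+m_2$, the resulting elements $\alpha$ (which depend on $c$) together with the associated translates cannot all be squeezed into the set of primitive-normal elements if that set is too small. More precisely, I would count pairs: on one hand, the number of primitive normal $\alpha$ is $\mathfrak{N}(q,n)$; on the other hand, applying the $\Bb$-property to translates $f(x)=x-c$ for $c$ running over $\F_q$ (or a large enough subset), each such $c$ for which $\alpha$ and $\alpha-c$ are both primitive must have $\alpha$ primitive-normal, so summing over such $c$ gives a lower bound forcing $\mathfrak{N}(q,n)$ to exceed $m_1+m_2+1$ unless a contradiction with $\mathfrak{N}(q,n)\leq m_1+m_2+1$ arises.

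Concretely, the cleanest route: if $\mathfrak{N}(q,n)\leq m_1+m_2+1$, list the primitive normal elements as $\alpha_1,\dots,\alpha_N$ with $N\leq m_1+m_2+1$. Build $f_1(x)=\prod_{i=1}^{N}(x-\alpha_i)$ if this has degree $\leq m_1$, or distribute the factors between $f_1$ and $f_2$ so that $\deg f_1\leq m_1$, $\deg f_2\leq m_2$, $\gcd(f_1,f_2)=1$, and condition (iii) holds (this last is arranged by multiplying, say, $f_1$ by a suitable irreducible $g\in\F_q[x]\setminus\{x\}$ to an appropriate power with $\gcd(n,q-1)=1$ — one must check the degree budget $N+\deg g^{\,n}$ still fits, which is where the hypothesis $\mathfrak{N}(q,n)\leq m_1+m_2+1$ is used to keep $N$ small; if necessary take $g$ linear and exploit that only $m_1+m_2+1$ roots are being accommodated). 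For this $f$, no primitive normal $\alpha$ can have $f(\alpha)$ primitive, because every primitive normal $\alpha$ is one of the $\alpha_i$, hence a root of $f_1$, so $f(\alpha_i)=0$, which is not primitive. Therefore $(q,n)\notin\Bb(m_1,m_2)$, as claimed.

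The main obstacle I anticipate is precisely the bookkeeping in condition (iii) of Definition \ref{upsilonset}: one needs the constructed $f=f_1/f_2$ to actually lie in $\Upsilon_{q^n}(m_1,m_2)$, which requires an irreducible $g\in\F_q[x]\setminus\{x\}$ with $\gcd(n,q-1)=1$, $g^n\mid f_1f_2$, $g^{n+1}\nmid f_1f_2$, while simultaneously respecting $\deg f_1\leq m_1$ and $\deg f_2\leq m_2$ and coprimality. This is a finite, elementary juggling act — choose $g$ linear (there is always such a $g\neq x$ since $q\geq 2$), put a high power of it into whichever of $f_1,f_2$ has slack, and absorb the $\alpha_i$-factors into the other; the hypothesis $\mathfrak{N}(q,n)\leq m_1+m_2+1$ is exactly what guarantees enough room. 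Once $f\in\Upsilon_{q^n}(m_1,m_2)$ is in hand, the conclusion is immediate from the observation that every primitive normal element is a zero of $f_1$ and hence maps to $0$, which fails to be primitive.
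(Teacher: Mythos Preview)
Your ``cleanest route'' is the right idea and matches the paper's approach, but there is a genuine gap in the degree count. You want every primitive normal element $\alpha_1,\dots,\alpha_N$ to be a root of $f_1f_2$, with $\deg f_1\le m_1$ and $\deg f_2\le m_2$. That forces $\deg(f_1f_2)\le m_1+m_2$, so you can absorb at most $m_1+m_2$ of the $\alpha_i$ as roots. The hypothesis only gives $N\le m_1+m_2+1$, and in the extremal case $N=m_1+m_2+1$ your construction is off by one: there is no room for the last $\alpha_N$ (and certainly no room for an additional factor $g^{n'}$ on top of that). Your final sentence, ``every primitive normal element is a zero of $f_1$'', is therefore not justified.

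The paper fixes exactly this off-by-one with a scaling trick: make $\alpha_1,\dots,\alpha_{N-1}$ roots of $f_1f_2$ (this fits, since $N-1\le m_1+m_2$), arrange $f_1(\alpha_N)f_2(\alpha_N)\neq 0$, and then replace $f$ by $h(x)=\beta f(x)$ with $\beta=1/f(\alpha_N)$. Then $h(\alpha_j)$ is $0$ or undefined for $j<N$, while $h(\alpha_N)=1$, which is not primitive either; and $h\in\Upsilon_{q^n}(m_1,m_2)$ since scaling by a nonzero constant preserves all three conditions. Two further remarks: your worry about condition (iii) is misplaced, because each linear factor $x-\alpha_i$ (with $\alpha_i$ primitive, hence $\alpha_i\neq 0$) already occurs with multiplicity one in $f_1f_2$ and $\gcd(1,q^n-1)=1$, so (iii) holds automatically without spending any extra degree; and the $n$ in Definition~\ref{upsilonset}(iii) is a multiplicity, not the extension degree, so the relevant coprimality is with $q^n-1$, not $q-1$. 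The initial pigeonhole argument with translates $x-c$ does not lead anywhere and can be dropped.
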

\begin{proof}
Let $ \{ \alpha_1, \ldots , \alpha_N \}$ be the 
set of all primitive elements 
of $\mathbb{F}_{q^n}$ which are normal over $\mathbb{F}_{q^n}$. Note that, if 
$N \leq 
m_1 + m_2+1$, we
may choose polynomials $f_1(x)$ and $f_2(x)$
of degrees $m_1$ and $m_2$, respectively, such that $f_1(\alpha_j) 
f_2(\alpha_j)=0$, for all $j=1,\ldots,N - 1$,  $f_1(\alpha_N)  f_2(\alpha_N 
) \neq 0$, and $f(x) = \frac{f_1(x)}{f_2(x)} \in \Upsilon_{q} (m_1,m_2)$. 
Thus,  
$f(\alpha_j)$ is not primitive for all $j=1,\ldots,N-1$ and taking $\beta = 
\frac{1}{ f( \alpha_N ) }$ we have that 
 $h(x)=\beta f(x) \in \Upsilon_{q} (m_1,m_2)$ and $h(\alpha_N) = 1$ is not 
 primitive either. This proves that $(q,n) \notin \Bb(m_1,m_2)$.
\end{proof}

\begin{proposition}\label{yesBb}
Let $q=2^k$, $n\geq 3$ and $m= max \{ m_1 , m_2\}$.
If
$$\dfrac{\mathfrak{N}(q,n) }{m} + \phi(q^n-1)> q^n +1,
$$
then $(q,n)\in \Bb(m_1,m_2)$.
\end{proposition}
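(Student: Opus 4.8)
The plan is to prove Proposition~\ref{yesBb} by a direct counting argument: for an arbitrary $f \in \Upsilon_{q^n}(m_1,m_2)$ I would bound from above the number of primitive $\alpha \in \F_{q^n}$, normal over $\F_q$, for which $f(\alpha)$ fails to be primitive, and show that this bound is strictly below $\mathfrak{N}(q,n)$. First, fix $f = f_1/f_2 \in \Upsilon_{q^n}(m_1,m_2)$ and note that $f$ is not a constant function: condition~(iii) of Definition~\ref{upsilonset} forces $f_1 f_2$ to possess a non-trivial irreducible factor, so $f_1 f_2$ is non-constant, and since $\gcd(f_1,f_2)=1$ the quotient $f_1/f_2$ cannot be constant. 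Hence $f_1 - \gamma f_2$ is a non-zero polynomial of degree at most $\max(m_1,m_2)=m$ for every $\gamma \in \F_{q^n}$. Setting, for $\gamma \in \F_{q^n}\cup\{\infty\}$, $B_\gamma = \{\alpha \in \F_{q^n} : f(\alpha)=\gamma\}$ with the convention that $f(\alpha)=\infty$ abbreviates $f_2(\alpha)=0$, one gets $|B_\infty|\le \deg f_2\le m$, $|B_0|\le \deg f_1\le m$ (using $\gcd(f_1,f_2)=1$), and $|B_\gamma|\le \deg(f_1-\gamma f_2)\le m$ for $\gamma\ne 0,\infty$; thus every fibre of $f$, including the one at infinity, has at most $m$ points.

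Next, call $\gamma$ \emph{bad} if $\gamma=\infty$ or $\gamma$ is a non-primitive element of $\F_{q^n}$; the latter class consists of $0$ together with the $(q^n-1)-\phi(q^n-1)$ non-generators of $\F_{q^n}^*$, so there are exactly $q^n-\phi(q^n-1)+1$ bad values. If $\alpha$ is primitive and normal over $\F_q$ but $f(\alpha)$ is not primitive, then $f(\alpha)$ is either undefined or a non-primitive element, i.e.\ $\alpha\in B_\gamma$ for some bad $\gamma$; so the number of such $\alpha$ is at most $\sum_{\gamma\text{ bad}}|B_\gamma| \le m\bigl(q^n+1-\phi(q^n-1)\bigr)$. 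The hypothesis $\mathfrak{N}(q,n)/m+\phi(q^n-1)>q^n+1$ rearranges to $\mathfrak{N}(q,n)>m(q^n+1-\phi(q^n-1))$, so this upper bound is strictly smaller than $\mathfrak{N}(q,n)$, the number of primitive normal elements of $\F_{q^n}$. Consequently at least one primitive $\alpha$, normal over $\F_q$, has $f(\alpha)$ primitive, and since $f$ was arbitrary we conclude $(q,n)\in\Bb(m_1,m_2)$.

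There is no serious obstacle here; the one point that needs care is the bookkeeping of the bad values — one must remember to include $0$ among the non-primitive elements and to count the ``value $\infty$'' (equivalently, the zeros of $f_2$ in $\F_{q^n}$) as one further fibre — since the resulting estimate then matches the hypothesis exactly, with nothing to spare. I would also note that, just as in the proof of Proposition~\ref{noBb}, the assumption $q=2^k$ does not actually enter this argument: it merely reflects the family of $q$ (powers of two) for which the conclusion gets invoked in the numerical section.
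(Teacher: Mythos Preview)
Your proof is correct and follows essentially the same counting argument as the paper: both rely on the observation that every fibre of $f$ has at most $m$ points and then compare with the number of non-primitive targets. The paper phrases the count as an image-size/pigeonhole estimate (first discarding poles from the primitive-normal set, then arguing $|\tilde{f}(A_f)|+\phi(q^n-1)>q^n$), while you bound the preimage of the ``bad'' values directly; the arithmetic and the underlying idea are identical.
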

\begin{proof}
Let $f(x) = f_1(x)/f_2(x) \in \Upsilon_{q^n} (m_1,m_2)$ and  let 
\[
A_f = \{ \alpha \in \mathbb{F}_{q^n} ;  \alpha \textrm{ primitive and normal 
over }
\mathbb{F}_q \textrm{ and } f_2(\alpha) \neq 0\}.
\]
Clearly $|A_f| \geq \mathfrak{N}(q,n) - m_2 \geq \mathfrak{N}(q,n) - m$.  
Let $\widetilde{f} : A_f \rightarrow \mathbb{F}_{q^n}$ be defined by $\alpha 
 \mapsto 
 f(\alpha)$,  given 
 $\beta \in \widetilde{f}(A_f)$ there are at
	most $m$
	elements $\alpha \in A_f$
	such that $f(\alpha)=\beta$, since $\alpha$ must be a zero of the
	polynomial $f_1(x)-\beta f_2(x)$,
	hence 
\[
|\widetilde{f}(A_f)| \geq \dfrac{\mathfrak{N}(q,n) - m}{m} = 
\dfrac{\mathfrak{N}(q,n)}{m} - 1.
\] 
There are $\phi(q^n -1)$
primitive elements in $\mathbb{F}_{q^n}$, therefore if $ 
\dfrac{\mathfrak{N}(q,n)}{m} - 1 + \phi(q^n-1)> q^n$, then at least one element 
$f(\alpha)$ is primitive for some $\alpha \in A_f$ and the result 
follows.
\end{proof}

\begin{lemma}
We have $(2,3),(2,4),(2,6),(3,3),(3,4) \notin \Bb(3,2)$ and 
$(2,5)$, $(2,7)$, $(2,11)$, $(8,3) \in \Bb(3,2)$.
\end{lemma}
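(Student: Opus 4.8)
The plan is to split the nine pairs into the four ``positive'' ones $(2,5),(2,7),(2,11),(8,3)$, all of the form $(2^k,n)$, which I would treat with Proposition~\ref{yesBb}, and the five ``negative'' ones $(2,3),(2,4),(2,6),(3,3),(3,4)$, which I would treat with Proposition~\ref{noBb} when $\mathfrak{N}(q,n)\le m_1+m_2+1=6$ and otherwise by exhibiting an explicit bad rational function. Recall $m=\max\{m_1,m_2\}=3$.

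For the positive pairs the point is simply to verify $\mathfrak{N}(q,n)/3+\phi(q^n-1)>q^n+1$. The number of elements of $\F_{q^n}$ normal over $\F_q$ equals $\Phi(x^n-1)$, which I get by factoring $x^n-1$ over $\F_q$, and since in each of these cases $q^n-1$ has at most two prime divisors, the non-primitive nonzero elements are few and give a good lower bound for $\mathfrak{N}(q,n)$. Concretely: for $(2,5)$, $x^5-1=(x-1)(x^4+x^3+x^2+x+1)$ over $\F_2$ gives $2^4-1=15$ normal elements, all primitive because $|\F_{32}^*|=31$ is prime, and $15/3+30>33$; for $(2,7)$, $x^7-1$ is $(x-1)$ times two irreducible cubics, giving $49$ normal elements, all primitive since $|\F_{128}^*|=127$ is prime, and $49/3+126>129$; for $(2,11)$, $x^{11}-1=(x-1)g(x)$ with $g$ irreducible of degree $10$, giving $2^{10}-1=1023$ normal elements, at most $23+89-1=111$ of them non-primitive, and $912/3+1936>2049$; for $(8,3)$, $x^3-1=(x-1)(x^2+x+1)$ over $\F_8$ gives $(8-1)(8^2-1)=441$ normal elements, at most $7+73-1=79$ non-primitive, and $362/3+432>513$. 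Proposition~\ref{yesBb} then gives $(q,n)\in\Bb(3,2)$ in each case; this is routine arithmetic once the factorizations are in hand.

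For $(2,3)$ and $(2,4)$ I would apply Proposition~\ref{noBb}. For $(2,3)$ this is immediate, as $\F_8$ has only $\phi(7)=6$ primitive elements, so $\mathfrak{N}(2,3)\le 6$. For $(2,4)$, the $\phi(15)=8$ primitive elements of $\F_{16}$ are the roots of the primitive quartics $x^4+x+1$ and $x^4+x^3+1$; since $x^4-1=(x-1)^4$ over $\F_2$, an element is normal over $\F_2$ exactly when its trace is nonzero, and only $x^4+x^3+1$ has roots of nonzero trace, so $\mathfrak{N}(2,4)=4\le 6$.

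The main difficulty is the remaining pairs $(2,6),(3,3),(3,4)$, where $\mathfrak{N}(q,n)>6$: e.g.\ $\F_{27}$ has four Frobenius orbits of primitive elements, and using that normality over $\F_3$ means nonzero trace (because $x^3-1=(x-1)^3$) one checks that exactly one of them, the roots of $x^3+2x+1$, has trace $0$ and hence is non-normal, so $\mathfrak{N}(3,3)=9$; similarly $\mathfrak{N}(2,6),\mathfrak{N}(3,4)>6$. Here Proposition~\ref{noBb} does not apply, and instead I would exhibit, by a finite (computer-assisted) search, an $f=f_1/f_2\in\Upsilon_{q^n}(3,2)$ such that $f(\alpha)$ is non-primitive for every primitive normal $\alpha$. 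Since $\gcd(q,q^n-1)=1$, the map $\alpha\mapsto\alpha^q$ preserves primitivity and normality, and $f(\alpha^q)=f(\alpha)^q$ when $f_1,f_2\in\F_q[x]$; so it suffices to search among $f$ with coefficients in $\F_q$ and to make $f(\alpha)$ non-primitive --- i.e.\ equal to $0$, to $\infty$, or of smaller multiplicative order --- for just one representative of each of the few Frobenius orbits of primitive normal elements. This reduces the search to at most $q^4\cdot q^3$ pairs $(f_1,f_2)$, and since $\F_{q^n}$ is small, non-primitive elements are abundant, so a valid $f$ is readily found; for $(3,3)$, for instance, one may take $f_1$ to be the $\F_3$-minimal polynomial of one of the three orbits (which forces $f\equiv 0$ on those three elements) and then choose $f_2\in\F_3[x]$ of degree at most $2$ so that the two remaining orbit representatives land on non-primitive values. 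Condition~(iii) of Definition~\ref{upsilonset} is satisfied automatically as soon as $f_1$ has a simple linear factor $x-\alpha$ with $\alpha\ne0$ (the exponent $1$ being coprime to $q^n-1$), and $\gcd(f_1,f_2)=1$ is easy to arrange, so the only real work is the finite verification that at least one candidate $f$ indeed makes every value $f(\alpha)$ non-primitive.
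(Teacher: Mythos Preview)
Your plan matches the paper's proof: Proposition~\ref{yesBb} for the four positive pairs, Proposition~\ref{noBb} for $(2,3)$ and $(2,4)$, and an explicit bad $f$ found by computer search for $(2,6),(3,3),(3,4)$. Your hand computations of $\mathfrak{N}(q,n)$ (or lower bounds for it) via the factorizations of $x^n-1$ and $q^n-1$ are a pleasant upgrade over the paper's bare SageMath citations, and your numbers agree with the paper's values $\mathfrak{N}(2,5)=15$, $\mathfrak{N}(2,7)=49$, $\mathfrak{N}(2,4)=4$; your lower bounds $912$ and $362$ for $(2,11)$ and $(8,3)$ are below the paper's exact values $957$ and $378$ but already suffice for Proposition~\ref{yesBb}.

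One point to flag: the sentence ``so it suffices to search among $f$ with coefficients in $\F_q$'' is not justified by the Frobenius-equivariance $f(\alpha^q)=f(\alpha)^q$. That identity only shortens the verification \emph{within} the class of $\F_q$-coefficient candidates; it does not show that a bad $f$ with $\F_q$-coefficients exists whenever one with $\F_{q^n}$-coefficients does. The paper's witnesses for $(2,6)$ and $(3,3)$ are indeed $x^2+x+1\in\F_2[x]$ and $x^2+x+2\in\F_3[x]$, so your restricted search succeeds there; but for $(3,4)$ the paper exhibits $f(x)=(ax+2a^3+2a^2+1)/(x+2a)$ with $a\in\F_{81}$ a root of $x^4-x^3-1$, so you should be prepared to widen the search to $\F_{q^n}$-coefficients in that case rather than asserting in advance that $\F_q$-coefficients suffice.
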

\begin{proof}
Using SageMath we determined that $\mathfrak{N}(2,3)=3$ and 
$\mathfrak{N}(2,4)=4$, so from Proposition \ref{noBb}
we get $(2,3),(2,4) \notin \Bb(3,2)$. We also determined that
$\mathfrak{N}(2,5)=15$, $\mathfrak{N}(2,7)=49$, $\mathfrak{N}(2,11)=957$ and
$\mathfrak{N}(8,3)=378$ so from Proposition \ref{yesBb} we get
$(2,5),(2,7),(2,11),(8,3) \in \Bb(3,2)$.

Using also SageMath we found that
for $(q,n)=(2,6)$ and $f(x)=x^2 + x + 1$ (among multiples other rational functions),
we get that for of all primitive 
element
$\alpha \in \F_{q^n}$, normal over $\F_q$,
$f(\alpha)$
is not primitive.

For $(q,n)=(3,3)$
and $f(x)=x^2 + x + 2$ (among multiples other rational functions),
we get that for of all primitive 
element
$\alpha \in \F_{q^n}$, normal over $\F_q$,
$f(\alpha)$
is not primitive.

For $(q,n)=(3,4)$, $a \in \mathbb{F}_{q^n}$
such that $a^4-a^3-1=0$
and $f(x)=\frac{ax + 2a^3 + 2a^2+1}{x + 2a}$ (among multiples other rational functions),
we get that for of all primitive 
element
$\alpha \in \F_{q^n}$, normal over $\F_q$,
$f(\alpha)$
is not primitive.
\end{proof}

The following theorem summarizes the above results.
\begin{theorem}
Let $q$ be a prime power
and let $n \geq 3$.
We have $(q,n) \in \mathcal{B}(3,2)$ except possibly for
$$
\begin{array}{l}
n=3 \quad \text{and} \quad q \neq 8 \text{ and }q \leq 37, \text{ or }
                              q \in \{43,49,61,67,71,79,81,121,151,211,331 \}; \\
n=4 \quad \text{and} \quad q \leq 37 \text{ or }
                              q \in \{41,43,47,83\} ;\\
n=5 \quad \text{and} \quad q \in \{3,4,5,7,9,11,16 \} ;\\
n=6 \quad \text{and} \quad q \in \{2,3,4,5,7, 9,11,13,19 \} ;\\
n=7 \quad \text{and} \quad q \in \{3,4,8\} ;\\
n=8 \quad \text{and} \quad q \in \{2,3,4,5,7,9\} ;\\
q=2 \quad \text{and} \quad n \in \{9,10,12,14,15,16,18,20,24\} ;\\
q=3 \quad \text{and} \quad n \in \{9,10,12,16\} ;\\
q=4 \quad \text{and} \quad n \in \{9,10,12\};  \quad \text{and} \quad
(q,n)  \in  \{(5,12),(7,12)\} .
\end{array}
$$
We also have that $(2,3),(2,4),(2,6),(3,3),(3,4) \notin \Bb(3,2)$.
\end{theorem}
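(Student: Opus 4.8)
The plan is to obtain the theorem by assembling the propositions and the final lemma of this section, so that the work consists only in reducing to a finite list of pairs and then reconciling the various exceptional sets. First I would invoke Proposition~\ref{casen}, which already gives $(q,n)\in\Bb(3,2)$ whenever $n\ge 6$ and $q\ge 23$, and whenever $n=3$ and $q\ge 3.74\cdot 10^{9}$, $n=4$ and $q\ge 3.91\cdot 10^{7}$, or $n=5$ and $q\ge 2.5\cdot 10^{6}$. This leaves two families to inspect: the ``columns'' $n\in\{3,4,5\}$, and the ``rows'' given by the prime powers $q\in\{2,3,4,5,7,8,9,11,13,16,17,19\}$ together with $n\ge 6$.

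For the columns I would quote Proposition~\ref{casen3} (the case $n=3$), Proposition~\ref{casen4} (the case $n=4$) and Proposition~\ref{casen5} (the case $n=5$), each of which supplies an explicit finite set of possibly exceptional pairs. For the rows I would quote Proposition~\ref{case2_1} ($q=2$), Proposition~\ref{case3_1} ($q=3$), Proposition~\ref{case2_2} ($q=4$), Proposition~\ref{case5_1} ($q=5$) and Proposition~\ref{caseq_low} (the remaining values $q\in\{7,8,9,11,13,16,17,19\}$); each of these, for $n\ge 6$, leaves only finitely many values of $n$ undecided. Taking the union of all the possibly exceptional pairs produced so far gives a finite list.

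The last step is to clear, or confirm, the handful of very small pairs that survive. Here I would use the final lemma of the section, which shows $(2,5),(2,7),(2,11),(8,3)\in\Bb(3,2)$ (via Proposition~\ref{yesBb} together with a direct computation of $\mathfrak{N}(q,n)$) and $(2,3),(2,4),(2,6),(3,3),(3,4)\notin\Bb(3,2)$ (via Proposition~\ref{noBb} for the first two, and explicit rational functions $f$ admitting no primitive value at a primitive normal element for the last three). Removing from the finite list the pairs now known to lie in $\Bb(3,2)$, and recording the certified non-members, yields exactly the list in the statement.

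I do not expect a genuine mathematical obstacle, since every ingredient is a previously proved result; the one delicate point is the bookkeeping. One must check that the exceptional sets of the ``column'' propositions (fixed $n$, all $q$) and of the ``row'' propositions (fixed $q$, all $n$) agree on the finitely many overlapping small pairs, and that no pair is silently dropped. For instance $(8,3)$ is flagged as exceptional in Proposition~\ref{casen3} but is resolved only in the final lemma, and $(2,6)$ occurs both as a ``possibly exceptional'' pair coming from Proposition~\ref{case2_1} and as a pair certified to lie outside $\Bb(3,2)$; such coincidences must be tracked carefully so that the final list is neither too large nor too small.
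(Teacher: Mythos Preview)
Your proposal is correct and matches the paper's approach exactly: the theorem is explicitly presented there as a summary of the preceding propositions and the final lemma, with no further argument given, so the proof really is just the bookkeeping you describe. Your cautionary remarks about reconciling the overlapping exceptional sets (e.g.\ $(8,3)$ and $(2,6)$) are apt and capture the only real content of the assembly step.
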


As mentioned after Definition \ref{conjunto-B}, 
if $n_1\leq m_1$ and $n_2 \leq m_2$ then
$\Bb(m_1,m_2) \subset \Bb(n_1,n_2)$. Thus, from the above theorem, we may also 
find information on $\Bb(3,1)$, $\Bb(3,0)$, $\Bb(2,2)$, $\Bb(2,1)$, $\Bb(2,0)$, $\Bb(1,1)$ and $\Bb(1,0)$.

%
%
%
%

\section*{Acknowledgements}
C\'{\i}cero Carvalho  was partially funded by FAPEMIG APQ-01645-16, Jo\~{a}o 
Paulo Guardieiro was partially funded by CAPES 88882.441370/2019-01, Victor 
G.L.\ Neumann was 
partially funded by FAPEMIG APQ-03518-18 and Guilherme Tizziotti was 
partially funded by 
CNPq 307037/2019-3.

\end{document}